\theoremstyle{plain}
\newtheorem{theorem}{Theorem}
\newtheorem{proposition}{Proposition}
\newtheorem{lemma}{Lemma}
\theoremstyle{definition}
\newtheorem{definition}{Definition}
\theoremstyle{remark}
\newtheorem{remark}{Remark}
\declaretheorem{claim}
\declaretheorem[name=Acknowledgements,numbered=no]{ack}
\declaretheorem[name=Outline of the paper,numbered=no]{outline}
\newcommand{\G}{\mathcal{G}}
\newcommand{\R}{\mathbb{R}}
\newcommand{\Z}{\mathbb{Z}}
\def\phi{\varphi}
\def\R{{\mathbb R}}
\def\Z{{\mathbb Z}}
\def\T{{\mathbb T}}
\begin{document}

\title[Gevrey regularity for the Vlasov-Poisson system]{Gevrey regularity for the Vlasov-Poisson system}
\date{\today}

\author[Renato Velozo Ruiz]{Renato Velozo Ruiz} \address{Centre for Mathematical Sciences,
University of Cambridge, Wilberforce Road, Cambridge, CB3 0WB, UK.}
\email{rav25@cam.ac.uk}

\begin{abstract} 
We prove propagation of $\frac{1}{s}$-Gevrey regularity $(s\in(0,1))$ for the Vlasov-Poisson system on $\T^d$ using a Fourier space method in analogy to the results proved for the 2D Euler system in \cite{KV} and \cite{LO}. More precisely, we give a quantitative estimate for the growth in time of the $\frac{1}{s}$-Gevrey norm for the solution of the system in terms of the force field and the gradient in the velocity variable of the distribution of matter. As an application, we show global existence of $\frac{1}{s}$-Gevrey solutions ($s\in (0,1)$) for the Vlasov-Poisson system in $\T^3$. Furthermore, the propagation of Gevrey regularity can be easily modified to obtain the same result in $\R^d$. In particular, this implies global existence of analytic $(s=1)$ and $\frac{1}{s}$-Gevrey solutions ($s\in (0,1)$) for the Vlasov-Poisson system in $\R^3$.
\end{abstract}

\maketitle

\section{Introduction} \label{introduction}
\subsection{Historical context}
Throughout this paper, we study the propagation of high order regularity for solutions $f(t,x,v)$ of the \emph{Vlasov-Poisson system} given by 
\begin{equation}\label{vlasov-poisson}
\begin{cases}
\partial_t f+v\cdot \nabla_xf+F(t,x) \cdot \nabla_vf=0,\\ F(t,x):=-\nabla_x W*_x\Big(\rho_f(t,x)-\dfrac{1}{(2\pi)^d}\|f_0\|_{L^1_{x,v}}\Big) \\
\rho_f(t,x):=\int_{\R^d} f(t,x,v) dv, \\
 f(t=0,x,v)=f_0(x,v),
\end{cases}
\end{equation}
where $t\in \R$, $x\in \T^d$ (or $x\in \R^d$), $v\in \R^d$ and $W$ denotes the usual Coulomb or Newtonian potential. 

This nonlinear transport equation is a well known model in astrophysics and plasma physics for the description of collisionless systems, see for instance \cite{BT} and \cite{LPit} .

The study of classical solutions for this system has been the topic of several papers written down since the beginning of the nineties. In particular, the problem of finding global classical solutions in dimension three has been studied in several works, see \cite{BD}, \cite{BR}, \cite{LP}, \cite{Pf} and \cite{S}. The first positive answer to this problem was given by Bardos and Degond for small initial data in \cite{BD}. Later on, Pfaffelmoser proved a global existence theorem for compactly supported initial data in $\R^3$, see \cite{Pf}. This result was later extended to $\T^3$ by Batt and Rein, see \cite{BR}. Afterwards, Lions and Perthame proved a result about propagation of moments which they used to show another global existence theorem in $\R^3$, but avoiding the compact support assumption in the initial data considered by Pfaffelmoser, see \cite{LP}. 

Around the same time when Pfaffelmoser and Lions-Perthame showed the global existence of classical solutions, Benachour proved a local existence result for analytic solutions on $\R^n$, see \cite{Be}. This result was motivated by the works \cite{Ba}, \cite{BB} and \cite{BBZ} about the global existence of analytic solutions for the 2D Euler system on $\R^2$, bounded domains and $\T^2$ respectively. Consequently, \cite{Be} leaves open the question of global existence of analytic solutions for the Vlasov-Poisson system in dimension three, as another of the several well-known analogies between this system and the 2D Euler system. 

Later on, Levermore and Oliver proved the propagation of analytic regularity for the 2D Euler system on $\T^2$ using a Fourier method in \cite{LO}, which comes from the notion of Gevrey regularity introduced by Gevrey in \cite{G}. Subsequently, the result in \cite{LO} was improved obtaining better estimates by Kukavica and Vicol in \cite{KV}.

The Gevrey regularity is a very strong notion of regularity given to functions which are $C^{\infty}$ and close to be analytic in the sense that they have stretched exponential decay of amplitudes of oscillations along frequencies, see Definition \ref{gevrey} and Section \ref{preliminaries} for further details. 

This notion has been proved to be useful in many different settings in partial differential equations, see for instance \cite{BMM}, \cite{FT} and \cite{KV}. In particular, we remark its key role in the proof of the Landau damping for the Vlasov-Poisson system given by Bedrossian, Masmoudi and Mouhot in \cite{BMM}. In fact, the problem studied in this paper is mentioned in \cite{BMM}. 

Motivated by the approach used in \cite{KV} and \cite{LO} to study the propagation of analytic regularity for the 2D Euler system, we prove the propagation of Gevrey regularity for the Vlasov-Poisson system on $\T^d$ as long as there exists a unique Sobolev solution $f$ for this system. Furthermore, we give a quantitative estimate for the growth in time of the Gevrey norm of the solution of the system in terms of the force field and the gradient in the velocity variables of the distribution of matter. As an application, we show global existence of Gevrey solutions for the Vlasov-Poisson system in $\T^3$. Furthermore, the propagation of Gevrey regularity can be easily modified to obtain the same result in $\R^d$. In particular, we obtain the global existence of analytic and Gevrey solutions for the Vlasov-Poisson system in $\R^3$.

\subsection{The main result}

Before presenting the main theorem of the paper, we proceed to define the key norms in its statement. \\

Let us start by introducing some basic notation. In the following, we denote the standard Sobolev norm of $f$ in $H^{\sigma}_{x,v}(\T^d\times \R^d)$ as $\|f\|_{\sigma}$ with $\sigma \in (0,\infty]$. We also consider some weighted Sobolev spaces in finite regularity denoted by $H^{\sigma}_{x,v;M}(\T^d\times \R^d)$ with norm 
\begin{equation}
    \|f\|_{\sigma,M}^2:=\sum_{|\alpha|\leq M} \|v^{\alpha}f\|_{ \sigma}^2.
\end{equation}  
Moreover, we define the Fourier coefficients of $f$ by \begin{equation}
    \hat{f}_k(\eta):=\dfrac{1}{(2\pi)^d}\int_{\T^d\times \R^d} e^{-ix\cdot k-iv\cdot \eta}f(x,v)dxdv,
\end{equation} 
and the Japanese bracket by $\langle k,\eta \rangle:=(1+|k|^2+|\eta|^2)^{1/2}$. \\

We proceed to define the function spaces in which we build solutions: the Gevrey classes.

\begin{definition}[Gevrey classes in $\T^d\times \R^d$]\label{gevrey}
A real-valued function $f\in C^{\infty}(\T^d\times \R^d)$ is said to be of \emph{Gevrey class $\frac{1}{s}$ with index of regularity $\lambda$, Sobolev correction $\sigma$ and weight $M$} if for some $s\in(0,1]$, $\sigma>0$ and $\lambda>0$, $f\in L^{2}(\T^d\times \R^d)$ and 
\begin{equation}
    \|f\|_{\lambda, \sigma,M;s}^2:=\sum_{|\alpha|\leq M} \|v^{\alpha}f\|_{\lambda, \sigma;s}^2<\infty,
\end{equation} 
where 
\begin{equation}
    \|v^{\alpha}f\|_{\lambda,\sigma;s}^2:=\|AD^{\alpha}_{\eta}\hat{f}_k(\eta)\|_{L^2_{k,\eta}}^2=\sum_{k\in\Z^d}\int_{\R_{\eta}^d}|D^{\alpha}_{\eta}\hat{f}_k(\eta)|^2\langle k,\eta \rangle^{2\sigma} e^{2\lambda \langle k,\eta \rangle^s} d\eta.
\end{equation} 
In the above $A$ is the Fourier multiplier $A=A_k(t,\eta)=\langle k,\eta \rangle^{\sigma}e^{\lambda(t)\langle k,\eta \rangle^{s}}.$ In the following, we denote by $\G^{\lambda,\sigma,M;1/s}(\T^d\times \R^d)$ to the set of functions of Gevrey class $s$ with index of regularity $\lambda$, Sobolev correction $\sigma$ and weight $M$.
\end{definition}

In order to propagate Gevrey regularity it is crucial to have a local existence result for this particular notion of regularity. Indeed, we use the following proposition which can be found in the beginning of Section 2.4 in \cite{BMM}.

\begin{proposition}[Local existence of Gevrey functions]\label{local_existence}
Let $f_0$ be an initial data for the Vlasov-Poisson system (\ref{vlasov-poisson}) on $\T^d\times \R^d$ (or $\R^d\times \R^d$) such that $\|f\|_{\lambda_0,\sigma,M;s}$ is finite for some $s\in (0,1]$, $\sigma >0$, $\lambda_0>0$ and $M>d/2$. Then, there exists some $T_0 > 0$ such that there exists a real continuous function $\lambda$ defined on $[0,T_0]$ with $\lambda(0)=\lambda_0$, $\inf_{t\in[0,T_0]} \lambda(t) > 0$, for which 
\begin{equation}
    \sup_{t\in [0,T_0]}\|f\|_{\lambda(t),\sigma,M;s}<\infty,
\end{equation} 
where $f$ is the unique smooth solution of the Vlasov-Poisson system (\ref{vlasov-poisson}) on $[0, T_0]$.
\end{proposition}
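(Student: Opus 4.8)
The plan is to set up a standard energy/continuity argument in the time-dependent Gevrey norm $\|\cdot\|_{\lambda(t),\sigma,M;s}$, where the index of regularity $\lambda(t)$ is allowed to decrease at a controlled rate so as to absorb the loss of one derivative coming from the transport term $v\cdot\nabla_x f$ and the force term $F\cdot\nabla_v f$. First I would recall that since $\|f_0\|_{\lambda_0,\sigma,M;s}<\infty$ with $M>d/2$, the initial data lies in every Sobolev space $H^\sigma_{x,v;M}$, so by the classical Cauchy--Kovalevskaya-type / Sobolev well-posedness theory for Vlasov--Poisson there is a unique smooth solution $f$ on a short time interval; the real content is the quantitative Gevrey bound. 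For this I would differentiate $\tfrac{1}{2}\tfrac{d}{dt}\|v^\alpha f\|_{\lambda(t),\sigma;s}^2$ for each multi-index $|\alpha|\le M$, i.e. compute $\langle A \partial_t D^\alpha_\eta \hat f_k, A D^\alpha_\eta \hat f_k\rangle_{L^2_{k,\eta}} + \dot\lambda \langle \langle k,\eta\rangle^s A D^\alpha_\eta \hat f_k, A D^\alpha_\eta \hat f_k\rangle$, and use the equation to replace $\partial_t \hat f_k(\eta)$. On the Fourier side $v\cdot\nabla_x f$ becomes $-k\cdot\nabla_\eta \hat f_k(\eta)$ and the nonlinear term $F\cdot\nabla_v f$ becomes a convolution in $k$ paired with multiplication by $i\eta$; each contributes an extra factor of size $\langle k,\eta\rangle$ relative to $\hat f$, which is exactly the derivative loss.

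The key mechanism is that the commutator of the Fourier multiplier $A_k(t,\eta)=\langle k,\eta\rangle^\sigma e^{\lambda(t)\langle k,\eta\rangle^s}$ with these operators, together with the good term $-|\dot\lambda|\,\langle k,\eta\rangle^s$ produced by the time derivative of $\lambda$, dominates the bad contributions. Concretely, for the linear transport term one writes $A_k(\partial_{\eta_j}) = (\partial_{\eta_j}) A_k + [A_k,\partial_{\eta_j}]$ and checks that $|[A_k,\partial_{\eta_j}]\hat f_k| \lesssim \lambda \langle k,\eta\rangle^{s-1}|k_j| A_k|\hat f_k|$, which is absorbed because $\langle k,\eta\rangle^{s-1}|k|\le \langle k,\eta\rangle^s$; the part of the transport term where $A$ and $\nabla_\eta$ genuinely commute is antisymmetric and integrates to zero. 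The force term is estimated by splitting the frequency of the convolution $\hat F_{k-\ell}\cdot(\ell \,\widehat{(v^\alpha f)}_\ell)$ (plus lower-order commutator pieces in which a $\partial_\eta$ falls on the $v^\alpha$ weight) into the regions $|k-\ell|\le |\ell|$ and $|k-\ell|>|\ell|$: in the first, $\langle k,\eta\rangle^s \lesssim \langle k-\ell\rangle^s + \langle \ell,\eta\rangle^s$ gives $e^{\lambda\langle k,\eta\rangle^s}\lesssim e^{\lambda\langle k-\ell\rangle^s}e^{\lambda\langle \ell,\eta\rangle^s}$, so $F$ is measured in a Gevrey norm in $x$ alone (which is controlled since $\rho_f$, and hence $F=-\nabla_x W *(\rho_f - \text{const})$, inherits Gevrey regularity from $f$ by integrating in $v$ with the weight $M>d/2$) and the derivative loss $|\ell|\le\langle\ell,\eta\rangle$ is again swallowed by $|\dot\lambda|\langle k,\eta\rangle^s$ after choosing $\dot\lambda$ sufficiently negative depending on $\|f\|_{\lambda,\sigma,M;s}$; in the second region the roles are swapped and one uses that the high-frequency factor now carries the weight. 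Summing over $|\alpha|\le M$, this yields a differential inequality of the form
\begin{equation}
\frac{d}{dt}\|f\|_{\lambda(t),\sigma,M;s}^2 \le \Big(C\big(\|f\|_{\lambda(t),\sigma,M;s}\big) + \dot\lambda(t)\Big)\,\|f\|_{\lambda(t),\sigma,M;s}^2 \cdot \langle k,\eta\rangle^s\text{-factors},
\end{equation}
and choosing $\lambda(t)$ to solve an ODE of the type $\dot\lambda(t) = -C_1\big(1+\|f(t)\|_{\lambda(t),\sigma,M;s}\big)$ makes the bracket nonpositive, giving $\|f(t)\|_{\lambda(t),\sigma,M;s}\le\|f_0\|_{\lambda_0,\sigma,M;s}$; a continuity (bootstrap) argument then shows $\lambda$ stays positive on a time interval $[0,T_0]$ with $T_0$ depending only on $\lambda_0$, $\sigma$, $M$ and $\|f_0\|_{\lambda_0,\sigma,M;s}$.

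The main obstacle is the nonlinear force term: one must carefully track how the bilinear frequency splitting interacts with the $v^\alpha$ weights (the operators $D^\alpha_\eta$ do not commute with the convolution structure cleanly, producing lower-order terms with fewer $\eta$-derivatives that still need to be closed within the same norm), and one must verify that the Gevrey-$x$ norm of the force $F$ is genuinely bounded by $\|f\|_{\lambda,\sigma,M;s}$ — this uses that $M>d/2$ so that $\int_{\R^d}\langle\eta\rangle^{-M}\,d\eta<\infty$ allows one to pass from the $(x,v)$-Gevrey norm with velocity weights to a $v$-integrated ($x$-only) Gevrey norm of $\rho_f$, and then the Coulomb kernel $-\nabla_x W$ is a bounded Fourier multiplier of order $0$ on $\T^d$ (its $k=0$ mode is removed by the neutralizing constant $\tfrac{1}{(2\pi)^d}\|f_0\|_{L^1_{x,v}}$). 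Since the statement attributes this proposition to Section 2.4 of \cite{BMM}, I would, in the write-up, either cite that reference for the details or reproduce the estimate in the form adapted to the norms of Definition \ref{gevrey}; in either case the scheme above is exactly the one that appears there, specialized from the Landau-damping setting to short-time propagation without the time-decay weights.
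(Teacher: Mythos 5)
You should first note that the paper does not actually prove Proposition \ref{local_existence}: it is quoted from Section 2.4 of \cite{BMM}, with the remark that it ``follows by standard methods''. The natural benchmark for your proposal is therefore the a priori estimate the paper \emph{does} carry out in Section \ref{section_main} for Theorem \ref{propagation_regularity}, which is exactly the energy scheme you describe (the decomposition $CK-E_L-E_{NL}$ with a decreasing index $\lambda(t)$). Your treatment of the linear transport term (the genuinely antisymmetric part integrates to zero, and the commutator with $A$ costs only $\lambda\langle k,\eta\rangle^{s-1}|k|\lesssim\lambda\langle k,\eta\rangle^{s}$, absorbable by the Cauchy--Kovalevskaya term) matches Claim \ref{linear_term_gevrey}, and your reduction of $F$ to a Gevrey norm of $\rho$ using $M>d/2$ and the decay of $\widehat W$ matches Lemma \ref{spatial_density}.

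The genuine gap is in the nonlinear term. On the Fourier side $F\cdot\nabla_v f$ contributes $\sum_{l}\rho_l\widehat W(l)\,l\cdot\eta\,\hat f_{k-l}(\eta)$: the loss is the factor $\eta$, a \emph{full} derivative sitting on the high-frequency factor $\hat f_{k-l}$ (the spatial frequency $|l|$ of the force is in fact gained back by $\widehat W(l)\sim|l|^{-2}$). Your claim that this loss ``is swallowed by $|\dot\lambda|\langle k,\eta\rangle^{s}$'' fails for every $s<1$: the good term is only of order $\langle k,\eta\rangle^{s}$, and no choice of $\dot\lambda$, however negative, dominates $\langle k,\eta\rangle$ at high frequency. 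The paraproduct splitting into $|k-\ell|\le|\ell|$ and $|k-\ell|>|\ell|$ does not repair this, because the factor $\eta$ cannot be transferred to the low-frequency factor. The missing idea --- the heart of Claim \ref{first_nonlinear_term_gevrey} --- is the cancellation coming from the fact that $F$ is independent of $v$, namely $\int F\cdot\nabla_v\big(Av^{\alpha}f\big)^2\,dv\,dx=0$, which lets one replace $A_k$ by the difference $A_k-A_{k-l}$ in the top-order term; the exponential part of that difference carries the factor $\lambda\,|\langle k,\eta\rangle^{s}-\langle k-l,\eta\rangle^{s}|\lesssim\lambda\langle l\rangle\langle k,\eta\rangle^{s-1}$, which converts the order-one loss into an order-$s$ loss that the $CK$ term can absorb. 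Without this step your differential inequality does not close for $s\in(0,1)$ (it is borderline workable only at $s=1$, the analytic case). By comparison, the further imprecision in your write-up --- asserting that the norm is monotone nonincreasing rather than merely Gronwall-controlled, even though lower-order terms not carrying $\langle k,\eta\rangle^{s}$ survive the absorption --- is minor.
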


The main result in the paper is based on the following Beale-Kato-Majda type blow up criteria (see \cite{BKM}) which can be found in the beginning of Section 2.4 in \cite{BMM}.

\begin{proposition}[Blow up criterion]\label{blow_up_criterion}
Let $f_0$ be an initial data for the Vlasov-Poisson system (\ref{vlasov-poisson}) on $\T^d\times \R^d$ (or $\R^d\times \R^d$) such that $\|f\|_{\lambda_0,\sigma,M;s}$ is finite for some $s\in (0,1]$, $\sigma >0$, $\lambda_0>0$ and $M>d/2$. Let $T_{\max}>0$ be the maximal time of existence of the unique smooth solution $f$ of the Vlasov-Poisson system (\ref{vlasov-poisson}). Then, if $T_{\max}$ is finite 
\begin{equation}\label{blow_up}
    \limsup_{t\to T_{\max}} \Big(\|F[f]\|_{W^{1,\infty}}(t)+\|\nabla_v f\|_{\infty,M}(t)\Big)=\infty.
\end{equation}
\end{proposition}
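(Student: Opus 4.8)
The plan is to prove Proposition~\ref{blow_up_criterion} as a standard propagation-of-regularity argument: assuming $T_{\max}<\infty$ and that the quantity in \eqref{blow_up} stays bounded on $[0,T_{\max})$, we derive an a priori bound on a high-order weighted Sobolev norm $\|f\|_{\sigma,M}$ uniform up to $T_{\max}$, which together with Proposition~\ref{local_existence} (applied at a time close to $T_{\max}$) contradicts the maximality of $T_{\max}$. The key point is that the Gevrey norm itself need not be controlled for the blow-up criterion; only a sufficiently high Sobolev norm (say $H^{\sigma}_{x,v;M}$ with $M>d/2$) must be, and the Gevrey propagation is then a separate matter handled elsewhere in the paper. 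So the real content is a differential inequality for $\|f(t)\|_{\sigma,M}^2$.

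First I would differentiate $\|v^\alpha f\|_\sigma^2$ in time along the Vlasov-Poisson flow, for each $|\alpha|\le M$. The transport term $v\cdot\nabla_x f$ is antisymmetric and produces no net contribution after integration by parts in $x$ (it does, however, interact with the weight $v^\alpha$, generating commutator terms of the form $v^{\alpha-e_j}\partial_{x_j}f$ which are lower order in the weight and absorbed by Cauchy-Schwarz into $\|f\|_{\sigma,M}^2$). The force term $F(t,x)\cdot\nabla_v f$ is the crucial one: after the Leibniz rule in $x$-derivatives and the weight, one produces terms where at most one derivative falls on $F$ and the rest on $f$; the top term is estimated by $\|F\|_{W^{1,\infty}}\|f\|_{\sigma,M}^2$ using that $F\in W^{1,\infty}$, while the term with a velocity derivative on $f$ together with the weight derivative, i.e.\ roughly $\int v^\alpha F\cdot\nabla_v f \cdot v^\alpha f$, is handled by integrating by parts in $v$ (moving $\nabla_v$ off $f$) so that it costs $\|F\|_{L^\infty_x}\|f\|_{\sigma,M}^2$ plus a term involving $\|\nabla_v f\|_{\infty,M}$. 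This is exactly where both quantities in \eqref{blow_up} enter. Commutators between the Fourier multiplier $\langle k,\eta\rangle^\sigma$ and multiplication by $F$ (a Kato-Ponce / commutator estimate in $H^\sigma$, valid since $\sigma$ can be taken a positive integer or via standard paradifferential bounds, and since $M>d/2$ makes $H^M$ an algebra in the velocity weights) are bounded by $\|F\|_{W^{1,\infty}}$ times lower-order norms. Summing over $|\alpha|\le M$ yields
\begin{equation}
\frac{d}{dt}\|f(t)\|_{\sigma,M}^2 \le C\Big(\|F[f]\|_{W^{1,\infty}}(t)+\|\nabla_v f\|_{\infty,M}(t)+1\Big)\|f(t)\|_{\sigma,M}^2.
\end{equation}
By Gr\"onwall's inequality and the assumed boundedness of the bracket on $[0,T_{\max})$, $\|f(t)\|_{\sigma,M}$ remains bounded as $t\to T_{\max}$.

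Then I would invoke Proposition~\ref{local_existence}: pick $t_*<T_{\max}$ with $T_{\max}-t_*$ smaller than the local existence time $T_0$ determined by the (bounded) data $f(t_*)$; this extends the smooth solution past $T_{\max}$, contradicting maximality. Hence $T_{\max}<\infty$ forces the $\limsup$ in \eqref{blow_up} to be infinite.

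The main obstacle I expect is the careful bookkeeping of the velocity weights $v^\alpha$ in combination with the $x$-Sobolev derivatives and the Fourier multiplier: the weight $v^\alpha$ does not commute with $\nabla_v$ (in the force term) nor, after Fourier transform, with $\langle k,\eta\rangle^\sigma$, so one must verify that every commutator is genuinely lower order — either fewer velocity powers or replaceable by $\|\nabla_v f\|_{\infty,M}$ — and closes in $\|f\|_{\sigma,M}^2$. A secondary subtlety is the low-frequency/mean-subtraction in the definition of $F$, but since $W*_x$ is smoothing and the subtracted constant only shifts $\rho_f$, $\|F\|_{W^{1,\infty}}$ already encodes everything needed and no extra elliptic estimate is required for this proposition. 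These are routine but must be done with care; none of them should require more than standard product and commutator estimates in weighted Sobolev spaces.
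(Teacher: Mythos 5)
Your overall strategy --- a Gr\"onwall-type energy estimate for $\|f\|_{\sigma,M}^2$ with $\|F[f]\|_{W^{1,\infty}}+\|\nabla_v f\|_{\infty,M}$ as the Gr\"onwall factor, followed by a continuation argument --- is exactly how the paper proceeds: the differential inequality you describe is Lemma \ref{sobolev_growth}, proved in Section \ref{growth_weighted_Sobolev} by the same integrations by parts in $x$ and $v$ and a Moser-type commutator estimate, after which the paper simply quotes the continuation criterion of \cite{BMM} phrased in terms of $\limsup_{t\to T_{\max}}\|f\|_{\sigma,M}(t)$. Two points in your outline, however, do not survive as written. First, in the force term the Leibniz/Kato--Ponce expansion unavoidably produces a contribution of the form $\|F\|_{\sigma}\,\|\nabla_v f\|_{\infty,M}$ in which all $\sigma$ derivatives fall on $F$; this is \emph{not} controlled by $\|F\|_{W^{1,\infty}}$, contrary to your assertion that ``no extra elliptic estimate is required for this proposition''. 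To close the inequality in $\|f\|_{\sigma,M}^2$ one must invoke the elliptic bound $\|F\|_{\sigma}\lesssim\|\rho\|_{\sigma-1}$ together with $\|\rho\|_{\sigma-1}\lesssim\|f\|_{\sigma-1,M}$ (Lemma \ref{spatial_density} with $\lambda=0$, using $M>d/2$), which is precisely what the paper does in the proof of Claim \ref{nonlinear_term_sobolev}.

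Second, your continuation step invokes Proposition \ref{local_existence}, but the local time $T_0$ there is produced from \emph{Gevrey} data, and no lower bound for $T_0$ in terms of $\|f(t_*)\|_{\sigma,M}$ alone is stated; since your Gr\"onwall argument controls only the Sobolev norm at $t_*$ (not the Gevrey norm, which is the content of Theorem \ref{propagation_regularity} and cannot be assumed here without circularity), you cannot conclude that the restarted solution reaches past $T_{\max}$. What is needed, and what the paper implicitly relies on through the \cite{BMM} criterion, is the standard local well-posedness in $H^\sigma_{x,v;M}$ with existence time depending only on $\|f(t_*)\|_{\sigma,M}$. With that substitution, and with the elliptic estimate restored, your argument is complete and coincides in substance with the paper's.
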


Because of Proposition \ref{blow_up_criterion}, the propagation of Gevrey regularity on $[0,T_0]$ will follow if (\ref{blow_up}) is satisfied. We are now able to state the main result of the paper.

\begin{theorem}[Propagation of Gevrey regularity] \label{propagation_regularity}
Let $f_0$ be an initial data for the Vlasov-Poisson system (\ref{vlasov-poisson}) on $\T^d\times \R^d$ (or $\R^d\times \R^d$) such that $\|f_0\|_{\lambda_0,\sigma,M;s}$ is finite for some $s\in (0,1]$, $\lambda_0>0$, $\sigma \geq d/2+6$ and $M>d/2$. Then, the unique solution $f\in C(0,T_{\max};H^\sigma_{x,v;M})$ to the Vlasov-Poisson system (\ref{vlasov-poisson}) satisfies $\forall t\in[0,T_{\max})$, 
\begin{equation}
    \|f\|_{\lambda,\sigma,M;s}(t)\leq C\exp\Big\{Ct+Ct\exp\Big[C\int_0^t\Big(\|F[f]\|_{W^{1,\infty}}+\|\nabla_v f\|_{\infty,M}+1\Big) ds\Big] \Big\}=:A(t),
\end{equation}  
and $\forall t\in[0,T_{\max})$, \begin{equation}
    \lambda(t)\geq C\exp\Big[-\int_0^t\Big(2A(t)+1\Big)ds\Big],
\end{equation} 
where $T_{\max}$ is the maximal time of existence and $C$ is a constant depending on the initial data $f_0$, the dimension $d$, the Sobolev correction $\sigma$ and the weight $M$.
\end{theorem}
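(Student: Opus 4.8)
The plan is to differentiate the Gevrey norm $\|f\|_{\lambda(t),\sigma,M;s}^2$ in time, estimate the resulting terms in Fourier space, and then choose $\lambda(t)$ by an ODE argument so that the dangerous contributions are absorbed. Concretely, writing $A = A_k(t,\eta) = \langle k,\eta\rangle^\sigma e^{\lambda(t)\langle k,\eta\rangle^s}$ for the Fourier multiplier, one has $\frac{d}{dt}\tfrac12\|v^\alpha f\|_{\lambda,\sigma;s}^2 = \dot\lambda\,\|\langle k,\eta\rangle^{s/2}AD_\eta^\alpha\hat f_k(\eta)\|_{L^2}^2 + \langle AD_\eta^\alpha \widehat{v^\alpha f}, AD_\eta^\alpha \widehat{\partial_t(v^\alpha f)}\rangle$, where in the second term one substitutes the Vlasov-Poisson equation (\ref{vlasov-poisson}) for $\partial_t f = -v\cdot\nabla_x f - F\cdot\nabla_v f$. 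The transport term $v\cdot\nabla_x f$ corresponds, on the Fourier side, to the operator $k\cdot\nabla_\eta$, which is a derivative in $\eta$ and does not commute with $D_\eta^\alpha$ nor with the multiplier; but it is antisymmetric up to commutators, so its net contribution is controlled by commutator terms of lower order plus a piece that is absorbed into the $\dot\lambda$ term (this is exactly the mechanism by which the analyticity/Gevrey radius $\lambda(t)$ must be allowed to shrink). The forcing term $F\cdot\nabla_v f$, after Fourier transform in both variables, becomes a convolution $\sum_{\ell}\widehat{F}_\ell\cdot(i(\eta-\xi)\text{-type})\,\hat f_{k-\ell}$, and the core of the estimate is a commutator bound for the multiplier $A$ acting on such a product.

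The key technical step, as in \cite{KV} and \cite{LO}, is the inequality
\begin{equation}
  e^{\lambda\langle k,\eta\rangle^s} \le e^{\lambda\langle \ell,\xi\rangle^s}\,e^{\lambda\langle k-\ell,\eta-\xi\rangle^s},
\end{equation}
which holds for $s\le 1$ by subadditivity of $t\mapsto t^s$ together with $\langle k,\eta\rangle \le \langle \ell,\xi\rangle + \langle k-\ell,\eta-\xi\rangle$, and more importantly the refined commutator estimate that, when the larger of the two frequencies carries the bulk of $\langle k,\eta\rangle^s$, one gains a factor like $\langle k,\eta\rangle^{s-1}$ times the full exponential weight on that factor — and this gained power, multiplied against $\langle k,\eta\rangle$ coming from the $\nabla$, is exactly $\langle k,\eta\rangle^s$, which is absorbed by the negative term $\dot\lambda\,\langle k,\eta\rangle^s\,|\cdots|^2$ provided $\dot\lambda \le -(\text{const})$. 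After distributing frequencies, the "low-frequency" factor is estimated in $L^\infty$ (hence by $\|F\|_{W^{1,\infty}}$ and by $\|\nabla_v f\|_{\infty,M}$, which is where those two quantities enter, matching the blow-up criterion Proposition \ref{blow_up_criterion}), while the "high-frequency" factor is estimated in the Gevrey norm itself; the Sobolev correction $\sigma \ge d/2+6$ provides the summability in $k$ and integrability in $\eta$ (an $\ell^1$–$\ell^2$ Young-type argument needs $\sigma > d/2$ with several extra derivatives to spare for the $v^\alpha$ weights, the $\nabla_v$, and the commutator losses). Handling the weights $v^\alpha$ is a bookkeeping matter: $v^\alpha$ becomes $D_\eta^\alpha$ on the Fourier side, and the product rule generates terms where the $\eta$-derivatives are distributed between the two convolution factors; each such term is again of the form (low in $L^\infty$)$\times$(high in Gevrey), and the weight $M > d/2$ suffices.

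Collecting the estimates yields a differential inequality of the schematic form
\begin{equation}
  \frac{d}{dt}\|f\|_{\lambda,\sigma,M;s}^2 \le \big(\dot\lambda + C\,B(t)\big)\,\|\langle\cdot\rangle^{s/2}\,\text{(weighted)}\,f\|^2 + C\,B(t)\,\|f\|_{\lambda,\sigma,M;s}^2,
\end{equation}
where $B(t) = \|F[f]\|_{W^{1,\infty}}(t) + \|\nabla_v f\|_{\infty,M}(t) + 1$. Choosing $\dot\lambda(t) = -C\,B(t)$ kills the first bracket, and Grönwall on the remainder gives $\|f\|_{\lambda,\sigma,M;s}(t)^2 \le \|f_0\|_{\lambda_0,\sigma,M;s}^2 \exp\!\big(C\int_0^t B\big)$; then $\lambda(t) = \lambda_0 - C\int_0^t B$ could become negative, so one instead iterates on short time intervals (or, equivalently, rescales) to arrive at the stated double-exponential bound $A(t)$ and the lower bound $\lambda(t)\ge C\exp[-\int_0^t(2A+1)]$. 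Local existence in the Gevrey class (Proposition \ref{local_existence}) ensures there is a nontrivial starting interval, and the blow-up criterion (Proposition \ref{blow_up_criterion}) guarantees $B(t)$ stays finite up to $T_{\max}$, so the argument closes on all of $[0,T_{\max})$. The main obstacle is the commutator estimate for the multiplier $A$ in the forcing term: one must show that the loss incurred when $A$ fails to factor through the convolution is no worse than $\langle k,\eta\rangle^{s-1}$ on the high-frequency factor (so that the $\nabla_v$ is tamed), and simultaneously keep enough Sobolev derivatives in reserve for the convolution sum to converge — this is precisely the delicate point that \cite{KV} sharpened over \cite{LO}, and adapting it to the transport-plus-force structure of Vlasov-Poisson (with the extra $v^\alpha$ weights) is the crux.
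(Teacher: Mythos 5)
Your overall architecture (Fourier-side energy estimate, a Cauchy--Kovalevskaya term $\dot\lambda\,\| \langle k,\eta\rangle^{s/2} A D^\alpha_\eta \hat f\|^2$ absorbing the top-order losses, commutator estimates in the spirit of \cite{KV} and \cite{LO}, then Gr\"onwall) is the same as the paper's. But there is a genuine gap in how you close the argument for $\lambda(t)$. You arrive at the constraint $\dot\lambda \le -C\,B(t)$ with $B \ge 1$, so your $\lambda(t) = \lambda_0 - C\int_0^t B$ reaches zero in finite time $\le \lambda_0/C$, and the proposed repair (``iterate on short time intervals, or rescale'') cannot restore positivity: restarting the same estimate only continues the linear decay, so you would prove a local-in-time result, not propagation on all of $[0,T_{\max})$. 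The point you are missing is structural: the commutator of the exponential weight gains not $\langle k,\eta\rangle^{s-1}$ but $\lambda\,\langle l\rangle\,\langle k,\eta\rangle^{s-1}$ --- every dangerous term that requires the $\sigma+s/2$ norm comes with a factor of $\lambda$ or $\lambda^2$ (see Claims \ref{linear_term_gevrey} and \ref{first_nonlinear_term_gevrey}). The resulting constraint is $\dot\lambda + \lambda^2\|f\|_{\lambda,\sigma,M} + \lambda\|f\|_{\sigma,M} + \lambda \le 0$, i.e.\ multiplicative decay, which is satisfied by a positive exponential $\lambda(t) = C\exp[-\int_0^t(2A+1)]$ for all time. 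Without extracting that factor of $\lambda$ the theorem as stated does not follow.

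Two further discrepancies worth flagging. First, the quantities $\|F[f]\|_{W^{1,\infty}}$ and $\|\nabla_v f\|_{\infty,M}$ do not enter the Gevrey energy estimate directly as $L^\infty$ bounds on the low-frequency paraproduct factor, as you suggest; the Fourier transform of $F\cdot\nabla_v f$ is a convolution \emph{only in the spatial frequency} $k$ (since $F$ is independent of $v$), of the form $\sum_l \rho_l \widehat W(l)\, l\cdot\eta\, \hat f_{k-l}(\eta)$, so the ``force'' factor depends only on $\rho$ and is placed in $\|\rho\|_{\lambda,\sigma}\lesssim\|f\|_{\lambda,\sigma,M}$ via the $\ell^2$-based Young inequality (Lemma \ref{young}); there is no natural slot for $\|\nabla_v f\|_{\infty,M}$ there. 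In the paper those two quantities enter only through the separate weighted Sobolev growth estimate (Lemma \ref{sobolev_growth}), which bounds $\|f\|_{\sigma,M}(t)$ by a single exponential of $\int_0^t B$; feeding that into the Gevrey Gr\"onwall is what produces the double-exponential $A(t)$. Second, the $v$-independence of $F$ is also what gives the nonlinearity its divergence structure, $\int F\cdot\nabla_v (A v^\alpha f)^2 = 0$, which is how the commutator $A_k - A_{k-l}$ (rather than $A_k$ itself) appears in $E^1_{NL}$; your ``$i(\eta-\xi)$-type'' convolution in both variables misstates this and would obscure the cancellation you need.
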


 This result is analogous to the one proved in \cite{KV} and \cite{LO} for the 2D Euler system. Even though Theorem \ref{propagation_regularity} applies to the Vlasov-Poisson system (\ref{vlasov-poisson}) in $\T^d \times \R^d$ and $\R^d \times \R^d$, we write down just the proof in the case of $\T^d\times \R^d$ since the other case follows identically. \\
 As an application of Theorem \ref{propagation_regularity}, we obtain the global existence of Gevrey solutions for the Vlasov-Poisson system (\ref{vlasov-poisson}) in $\T^3\times \R^3$ or $\R^3\times \R^3$. This result follows directly by using the global existence theorems in \cite{BR} and \cite{Pf} for the case of Vlasov-Poisson on $\T^d \times \R^d$ and $\R^d \times \R^d$ respectively. 


\begin{theorem}[Global existence of Gevrey solutions for $s\in (0,1)$]\label{global_existence_gevrey}
Let $f_0$ be a compactly supported initial data for the Vlasov-Poisson system (\ref{vlasov-poisson}) on $\T^3\times \R^3$ (or $\R^3\times \R^3$) such that $\|f_0\|_{\lambda_0,\sigma,M;s}$ is finite for some $s\in (0,1)$, $\sigma \geq 15/2$, $M>3/2$ and $\lambda_0>0$. Then, the unique global solution $f\in C(0,\infty;H^\sigma_{x,v;M})$ to the Vlasov-Poisson system (\ref{vlasov-poisson}) satisfies $\forall t\in[0,\infty)$, 
\begin{equation}
    \|f\|_{\lambda,\sigma,M;s}(t)\leq C\exp\Big\{Ct+Ct\exp\Big[C\int_0^t\Big(\|F[f]\|_{W^{1,\infty}}+\|\nabla_v f\|_{\infty,M}+1\Big) ds\Big] \Big\}=:A(t),
\end{equation} 
and $\forall t\in[0,\infty)$, 
\begin{equation}
    \lambda(t)\geq C\exp\Big[-\int_0^t\Big(2A(t)+1\Big)ds\Big],
\end{equation} 
where $C$ is a constant depending on the initial data $f_0$, the dimension $d$, the Sobolev correction $\sigma$ and the weight $M$. 
\end{theorem}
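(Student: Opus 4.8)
The plan is to obtain Theorem \ref{global_existence_gevrey} as an essentially immediate consequence of Theorem \ref{propagation_regularity} together with the classical global well-posedness theory for the three-dimensional Vlasov--Poisson system. The two displayed estimates in the statement are verbatim the conclusion of Theorem \ref{propagation_regularity}, so the whole content to be verified is that the maximal time of existence $T_{\max}$ of Proposition \ref{blow_up_criterion} is $+\infty$ under the present hypotheses, and that the quantity $A(t)$ is then finite for every $t$, which is automatic once $\|F[f]\|_{W^{1,\infty}}+\|\nabla_v f\|_{\infty,M}$ is locally integrable in time.

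First I would note that a compactly supported $f_0\in\G^{\lambda_0,\sigma,M;1/s}(\T^3\times\R^3)$ (resp. on $\R^3\times\R^3$) with $\sigma\ge 15/2=3/2+6$ and $M>3/2$ is in particular a smooth, compactly supported initial datum, hence falls under the hypotheses of the Pfaffelmoser global existence theorem \cite{Pf} on $\R^3\times\R^3$ and of its toral version due to Batt--Rein \cite{BR} on $\T^3\times\R^3$. These furnish a unique global-in-time classical solution whose $v$-support on any finite interval $[0,T]$ grows at most polynomially in $t$; consequently $\rho_f(t,\cdot)$ is bounded, $F[f](t,\cdot)\in W^{1,\infty}_x$ with a bound uniform on $[0,T]$, and, by propagating the derivative of $f$ along the globally defined characteristics, $\nabla_v f(t,\cdot)\in L^\infty_{x,v;M}$ with a bound uniform on $[0,T]$ as well. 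Propagating Sobolev regularity then shows this classical solution lies in $C(0,\infty;H^\sigma_{x,v;M})$, so by uniqueness it coincides with the solution handed to us by Proposition \ref{local_existence}.

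Next I would invoke the blow-up criterion of Proposition \ref{blow_up_criterion}: if $T_{\max}$ were finite it would force $\limsup_{t\to T_{\max}}\big(\|F[f]\|_{W^{1,\infty}}(t)+\|\nabla_v f\|_{\infty,M}(t)\big)=\infty$, directly contradicting the uniform-on-$[0,T]$ bounds established above for every $T<\infty$. Hence $T_{\max}=+\infty$. Since the same bounds make $t\mapsto\|F[f]\|_{W^{1,\infty}}(t)+\|\nabla_v f\|_{\infty,M}(t)+1$ locally integrable on $[0,\infty)$, the function $A(t)$ of Theorem \ref{propagation_regularity} is finite for all $t\ge 0$, and applying Theorem \ref{propagation_regularity} on $[0,T_{\max})=[0,\infty)$ yields both inequalities, finishing the proof.

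The only delicate point --- more a matter of care than a genuine obstacle --- is to confirm that the Gevrey data really satisfy the hypotheses of \cite{Pf} and \cite{BR} (smoothness and compact support, and, in the versions requiring it, $f_0\ge 0$) and that the classical solution constructed there is the same object to which Theorem \ref{propagation_regularity} is applied; this is handled by the standard uniqueness theory for Vlasov--Poisson in $H^\sigma_{x,v;M}$ with $\sigma>d/2+1$, which is comfortably implied by $\sigma\ge 15/2$.
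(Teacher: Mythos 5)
Your proposal is correct and follows essentially the same route as the paper: both deduce the result by feeding the classical global existence theory for three-dimensional Vlasov--Poisson (Batt--Rein on $\T^3$, Pfaffelmoser/Schaeffer on $\R^3$) into Theorem \ref{propagation_regularity}, using the finiteness of $\|F[f]\|_{W^{1,\infty}}+\|\nabla_v f\|_{\infty,M}$ to conclude that $A(t)$ is finite for all $t$. Your version merely makes explicit the appeal to the blow-up criterion of Proposition \ref{blow_up_criterion} to rule out a finite $T_{\max}$, a step the paper leaves implicit, and cites \cite{Pf} where the paper's proof cites \cite{S} for the $\R^3$ case; neither difference is substantive.
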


\begin{remark}
 Furthermore, since the global existence theorem proved by Lions-Perthame does not require the initial data to be compactly supported, we also obtain a global existence theorem for analytic functions for the Vlasov-Poisson system in $\R^3$. Therefore, this paper answers the open question coming from \cite{Be} on the existence of global analytic solutions for this system on $\R^3$.
\end{remark}




\begin{outline}
In Section \ref{preliminaries}, we introduce the relevant notions and tools used in the proof of Theorem \ref{propagation_regularity}. In Section \ref{growth_weighted_Sobolev}, we prove a quantitative estimate for the growth of the Sobolev norm of the distribution of matter $f$ which is used in the proof of Theorem \ref{propagation_regularity} and Theorem \ref{global_existence_gevrey}. Finally, we prove Theorem \ref{propagation_regularity} in Section \ref{section_main} and Theorem \ref{global_existence_gevrey} in Section \ref{section_application}. 
\end{outline}

\begin{ack}
I would like to thank my supervisors Mihalis Dafermos and Cl\'ement Mouhot for their continued guidance and encouragements throughout all this work. Also, I would like to thank Davide Parise for many suggestions during the revision of the manuscript. This article was supported by CONICYT Chile, the Cambridge Centre for Analysis and Cambridge Trust.
\end{ack}

\section{Preliminaries} \label{preliminaries}
Throughout this section, we discuss the tools required for the proof of our main results. Firstly, we state the Vlasov-Poisson system in Fourier variables and explain how we apply Proposition \ref{blow_up_criterion} about local existence. Secondly, we give more details on Gevrey spaces and state the main technical tools to prove the main result.

\subsection{Vlasov-Poisson in Fourier variables}
Let us recall the definition of the Fourier transform of a function $f$ defined on $\T^d\times \R^d$ by \begin{equation}
    \hat{f}_k(\eta):=\dfrac{1}{(2\pi)^d}\int_{\T^d\times \R^d} e^{-ix\cdot k-iv\cdot \eta}f(x,v)dxdv.
\end{equation} 
Similarly, we recall the definition of the Fourier transform of a function $\rho$ defined on $\T^d$ by \begin{equation}
    \hat{\rho}_k:=\dfrac{1}{(2\pi)^d}\int_{\T^d} e^{-ix\cdot k}\rho(x)dx.
\end{equation} 
Since we follow a Fourier space method, we use the Vlasov-Poisson in Fourier variables given by \begin{equation}
    \partial_t \hat{f}_k(t,\eta)-k\cdot \nabla_{\eta}\hat{f}_k(t,\eta)+\sum_{l\in \Z^d_*}\rho_l(t)\widehat{W}(l)l\cdot \eta \hat{f}_{k-l}(t,\eta)=0,
\end{equation} 
where $\rho_l$ are the Fourier coefficients of $\rho$ on $\T^d$. Moreover, recall that in Fourier variables for every $k\in \Z^d\setminus\{0\}$ we have the decay 
\begin{equation}
    |\widehat{W}(k)|\leq \dfrac{C_{W}}{|k|^2},
\end{equation} 
where $W$ is either the Coulomb or the Newtonian potential and $C_W$ is a constant depending just on $W$. We refer the reader to \cite{BMM} for further details about the periodisation in $\T^d$.

\subsection{Local existence}
As we discussed in the introduction, our main result relies on a local existence result and a blow up criterion which allow us to prove the theorem as long as 
\begin{equation} \label{condition_force}
    \limsup_{t\to T}\Big(\|F[f]\|_{W^{1,\infty}}(t)+\|\nabla_v f\|_{\infty,M}(t)\Big) <\infty.
\end{equation}
Proposition \ref{local_existence} and Proposition \ref{blow_up_criterion} are stated and discussed in Section 2.4 of \cite{BMM}. Even though there is no proof of this result in \cite{BMM}, Proposition \ref{local_existence} follows by standard methods, so we do not go into details. However, in \cite{BMM} the result is slightly different in the sense that the condition (\ref{condition_force}) is replaced by \begin{equation}
\limsup_{t\to T}\|f\|_{\sigma,M}(t) <\infty.    
\end{equation} 
Note that last condition is weaker since by the following lemma $\|f\|_{\sigma,M}(t)$ can be bounded uniformly in terms of $\|F[f]\|_{W^{1,\infty}}(t)+\|\nabla_v f\|_{\infty,M}(t)$. 

\begin{lemma}\label{sobolev_growth} Let $f_0$ be an initial data for the Vlasov-Poisson system (\ref{vlasov-poisson}) on $\T^d\times \R^d$ (or $\R^d\times \R^d$) such that $\|f_0\|_{\sigma,M}$ is finite for some $\sigma >0$ and $M>d/2$. Then, the unique solution $f\in C(0,T_0;H^\sigma_{x,v;M})$ to the Vlasov-Poisson system (\ref{vlasov-poisson}) satisfies 
\begin{equation}
\forall t\in[0,T_{\max}),\quad \|f\|_{\sigma,M}^2(t)\leq C \exp\Big[C\int_0^t\Big(\|F[f]\|_{W^{1,\infty}}+\|\nabla_v f\|_{\infty,M}+1\Big) ds\Big],    
\end{equation} 
where $T_{\max}$ is the maximal time of existence and $C$ is a constant depending on the initial data $f_0$, the dimension $d$, the Sobolev parameter $\sigma$ and the weight $M$.
\end{lemma}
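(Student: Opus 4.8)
I would prove Lemma \ref{sobolev_growth} by a standard energy estimate performed directly on the weighted Sobolev norm $\|f\|_{\sigma,M}^2$, tracking carefully which terms require the full strength of $\|F[f]\|_{W^{1,\infty}}$ and $\|\nabla_v f\|_{\infty,M}$ and which are controlled by the transport structure alone. First I would fix a multi-index $\alpha$ with $|\alpha|\le M$ and apply $v^\alpha$ together with a Fourier multiplier $\langle k,\eta\rangle^\sigma$ (equivalently work with $\langle D_{x,v}\rangle^\sigma(v^\alpha f)$) to the equation, then take the $L^2_{x,v}$ inner product with the same quantity and sum over $\alpha$. The goal is a differential inequality of the form $\frac{d}{dt}\|f\|_{\sigma,M}^2 \le C\big(\|F[f]\|_{W^{1,\infty}} + \|\nabla_v f\|_{\infty,M} + 1\big)\|f\|_{\sigma,M}^2$, from which Grönwall gives the claimed exponential bound.

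\textbf{Key steps.} (1) The free-transport part $v\cdot\nabla_x f$: after commuting $v^\alpha$ through it, the top-order term $v\cdot\nabla_x(v^\alpha f)$ is skew-adjoint in $L^2_{x,v}$ and contributes nothing to the energy identity, while the commutator $[v^\alpha, v\cdot\nabla_x]f$ lowers the $v$-weight by one order and so is bounded by $\|f\|_{\sigma,M}^2$; there is also a commutator between $\langle D\rangle^\sigma$ and $v\cdot\nabla_x$, handled by a standard Kato–Ponce / commutator estimate, again absorbed into $C\|f\|_{\sigma,M}^2$. (2) The force term $F(t,x)\cdot\nabla_v f$: here $F$ depends only on $x$, so commuting $v^\alpha$ through is free; the main term $F\cdot\nabla_v(v^\alpha f)$ paired with $v^\alpha f$ integrates by parts in $v$ to produce $\tfrac12\int (\nabla_v\cdot F)|v^\alpha f|^2 = 0$ since $F=F(x)$ — or, after inserting $\langle D\rangle^\sigma$, a commutator $[\langle D\rangle^\sigma, F\cdot\nabla_v]$ which by the commutator estimate costs one derivative on $F$, hence $\lesssim \|F\|_{W^{1,\infty}}\|f\|_{\sigma,M}^2$. (3) The term where too many derivatives land on $F$: when $\langle D\rangle^\sigma$ distributes onto $F$ with $\nabla_v f$ keeping most of its regularity, one needs $\|\nabla_v f\|_{\infty,M}$ to close — this is exactly why the blow-up quantity appears. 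For $F$ one uses that $\hat F_k = -i k\,\widehat W(k)(\hat\rho_k - \text{const})$ with $|\widehat W(k)|\le C_W/|k|^2$, so $F$ (and $\nabla_x F$) is controlled by $\rho_f$ in a Sobolev norm and ultimately by $\|f\|_{\sigma,M}$ using $M>d/2$ to integrate out $v$; this lets the $\|F\|_{W^{1,\infty}}$-factor be itself bounded, but it is cleaner to just leave it as the stated quantity since that is what the blow-up criterion supplies. (4) Combine, sum over $|\alpha|\le M$, and apply Grönwall.

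\textbf{Main obstacle.} The delicate point is the commutator bookkeeping at top order: when $\sigma$ is not an integer one cannot naively Leibniz-expand $\langle D_{x,v}\rangle^\sigma$, so one must invoke a Kato–Ponce-type fractional Leibniz estimate of the form $\|\langle D\rangle^\sigma(gh) - g\langle D\rangle^\sigma h\|_{L^2} \lesssim \|\nabla g\|_{L^\infty}\|h\|_{H^{\sigma-1}} + \|g\|_{H^\sigma}\|h\|_{L^\infty}$, applied with $g=F$ (so $\|\nabla g\|_{L^\infty}\le\|F\|_{W^{1,\infty}}$) and $h=\nabla_v f$ (so $\|h\|_{L^\infty}$ contributes the $\|\nabla_v f\|_{\infty}$-term, upgraded to the weighted $\|\nabla_v f\|_{\infty,M}$ after including the $v^\alpha$ weights). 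One must also be careful that the $v$-weights $v^\alpha$ interact correctly with the $x$-frequency multiplier — these commute, since $v^\alpha$ is a multiplication operator in $x$ and a differential operator in $\eta$ on the Fourier side — and that the assumption $\sigma>0$, $M>d/2$ is enough for $\rho_f$ and $F$ to make sense with the right estimates. Everything else is routine energy-method computation, so I would state the fractional Leibniz/commutator lemma, carry out the three term-by-term estimates, and conclude by Grönwall, deferring the repetitive index manipulations.
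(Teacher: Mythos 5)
Your proposal is correct and follows essentially the same route as the paper's proof: an energy estimate on $\|D^{\beta}(v^{\alpha}f)\|_{L^2}^2$ exploiting the skew-adjointness of $v\cdot\nabla_x$, integration by parts in $v$ for the top-order force term (using that $F$ is independent of $v$), a Moser/Kato--Ponce commutator bound of exactly the form you state (the paper's Claim \ref{interpolation}) producing the $\|\nabla F\|_{\infty}\|f\|_{\sigma,M}+\|F\|_{\sigma}\|\nabla_v f\|_{\infty,M}$ terms, elliptic regularity plus $M>d/2$ to close, and Gr\"onwall. The only cosmetic differences are that the paper Leibniz-expands integer-order $D^{\beta}$ rather than invoking a fractional Leibniz rule, and keeps the weight $v^{\alpha}$ attached to $\partial_{v_i}f$ when applying the commutator estimate (so that the $L^{\infty}$ factor is literally $\|\nabla_v f\|_{\infty,M}$); neither changes the substance.
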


We prove Lemma \ref{sobolev_growth} in Section \ref{growth_weighted_Sobolev}.
\subsection{Gevrey spaces}
Let us discuss the main properties of the Gevrey spaces. As mentioned in the introduction, the Gevrey regularity is a very strong notion of regularity satisfied by smooth functions which are close to be analytic. \\

The space $\G^{\lambda,\sigma,M;s}(\T^d\times \R^d)$ is a vector space closed under multiplication and differentiation. Moreover, $\G^{\lambda,\sigma,M;s_1}(\T^d\times \R^d)\subset \G^{\lambda,\sigma,M;s_2}(\T^d\times \R^d)$ whenever $s_1<s_2$. In particular, these properties are satisfied by the space of real analytic functions, which is exactly $\G^{\lambda,\sigma,M;1}(\T^d\times \R^d)$. As a matter of fact, in this case $\|f\|_{\lambda,\sigma,M; 1}$ is a norm for the space of analytic functions with radius of analyticity $\lambda$. \\

Similarly to the functions defined on $\T^d\times \R^d$, we introduce the corresponding Gevrey norms for functions on $\T^d$. 

\begin{definition}[Gevrey classes in $\T^d$]\label{spatial_gevrey}
A real-valued function $\rho\in C^{\infty}(\T^d)$ is said to be of \emph{Gevrey class $\frac{1}{s}$ with index of regularity $\lambda$ and Sobolev correction $\sigma$} if for some $s\in(0,1]$, $\sigma>0$ and $\lambda>0$, $\rho \in L^{2}(\T^d)$ and 
\begin{equation}
\|\rho\|_{\lambda,\sigma; s}^2:=\|B\rho_k\|_{L^2_{k}}=\sum_{k\in \Z^d}|\rho_k|^2 \langle k\rangle^{2\sigma} e^{2\lambda \langle k \rangle^{s}}< \infty,    
\end{equation} 
where $B$ is the Fourier multiplier $B=B_k(t)=\langle k \rangle^{\sigma}e^{\lambda(t)\langle k \rangle^s}$, the term $\langle k \rangle:=(1+|k|^2)^{1/2}$ is the Japanese bracket and $\rho_k$ the Fourier coefficients of $\rho$ on $\T^d$. 
\end{definition}

Henceforth, we omit the parameter $s\in(0,1]$ which is fixed from now on.

\subsection{Toolbox}
Before moving to the next section, we are going to state two important lemmata which are going to be used repetitively in the main energy estimates of the article. \\
In the following, we use the notation $f\lesssim g$ when there exists a constant $C > 0$ independent of the parameters of interest such that $f \leq Cg$. Moreover, we use the notation $f \lesssim_{\alpha} g$ if we want to emphasize that the implicit constant depends on $\alpha$. 

\begin{lemma}[Young's inequality]\label{young}
We have the following two inequalities. 
\begin{enumerate}
    \item Let $f^1_k(\eta)$, $f^2_k (\eta) \in L^2(\Z^d \times \R^d )$ and $\langle k\rangle^{\sigma} r_k (t) \in L^2(\Z^d)$ for $\sigma >d/2$. Then, for any $t \in \R$ we have 
    \begin{equation}
        \Big|\sum_{k,l}\int_{\R_{\eta}^d}f^1_k(\eta)r_l(t) f^2_{k-l}(\eta)d\eta\Big|\lesssim_{\sigma,d} \|f^1\|_{L^2_{k,\eta}} \| \langle k\rangle^{\sigma} r\|_{L^2_{k}} \|f^2\|_{L^2_{k,\eta}}.
    \end{equation}
    \item Let $f^1_k(\eta)$, $\langle k\rangle^{\sigma} f^2_k (\eta) \in L^2(\Z^d \times \R^d )$ and $ r_k (t) \in L^2(\Z^d)$ for $\sigma >d/2$. Then, for any $t \in \R$ we have 
    \begin{equation}
        \Big|\sum_{k,l}\int_{\R_{\eta}^d}f^1_k(\eta)r_l(t) f^2_{k-l}(\eta)d\eta\Big|\lesssim_{\sigma,d} \|f^1\|_{L^2_{k,\eta}} \| r\|_{L^2_{k}} \|\langle k\rangle^{\sigma}f^2\|_{L^2_{k,\eta}}.
    \end{equation}
\end{enumerate}
\end{lemma}



The proof of Lemma \ref{young} can be found in Section 3.1 of \cite{BMM}. Finally, we state a result which is going to be used to compare Gevrey norms of the spatial density and Gevrey norms for the distribution of matter.

\begin{lemma}\label{spatial_density}
Let $f$ be a solution of the Vlasov-Poisson system (\ref{vlasov-poisson}) and $\rho$ the induced spatial density. Then, 
\begin{equation}
    \|\rho\|_{\lambda, \sigma;s}\lesssim \|f\|_{\lambda, \sigma, M;s},
\end{equation} 
for every $M>d/2$
\end{lemma}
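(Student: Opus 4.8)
The goal is to bound the spatial Gevrey norm of $\rho = \rho_f$ by the phase-space Gevrey norm of $f$. The plan is to work on the Fourier side. Recall $\rho_k = \int_{\R^d} f(x,v)e^{-ix\cdot k}\,dx\,dv$ integrated appropriately, so in terms of the phase-space Fourier coefficients we simply have $\rho_k = \hat f_k(0)$, the evaluation of $\eta \mapsto \hat f_k(\eta)$ at $\eta = 0$. Thus
\begin{equation}
\|\rho\|_{\lambda,\sigma;s}^2 = \sum_{k\in\Z^d} |\hat f_k(0)|^2 \langle k\rangle^{2\sigma} e^{2\lambda\langle k\rangle^s},
\end{equation}
and since $\langle k\rangle \leq \langle k, 0\rangle = \langle k,\eta\rangle|_{\eta=0}$, the weights $\langle k\rangle^{2\sigma}e^{2\lambda\langle k\rangle^s}$ are dominated by $\langle k,\eta\rangle^{2\sigma}e^{2\lambda\langle k,\eta\rangle^s}$ at $\eta = 0$. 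So it suffices to control the pointwise value $|\hat f_k(0)|$ by the $L^2_\eta$-type quantity appearing in $\|f\|_{\lambda,\sigma,M;s}$.

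The key analytic step is a trace/Sobolev embedding in the $\eta$ variable: for a function $g\colon \R^d\to\C$ one has $|g(0)|^2 \lesssim_M \|g\|_{H^M(\R^d)}^2$ whenever $M > d/2$. Applying this to $g = \hat f_k(\cdot)$ gives
\begin{equation}
|\hat f_k(0)|^2 \lesssim_M \sum_{|\alpha|\leq M}\int_{\R^d} |D^\alpha_\eta \hat f_k(\eta)|^2\,d\eta.
\end{equation}
I would multiply this through by $\langle k\rangle^{2\sigma}e^{2\lambda\langle k\rangle^s}$, sum in $k$, and then swap the order of summation. The point is that for each fixed $\alpha$, $\sum_k \langle k\rangle^{2\sigma}e^{2\lambda\langle k\rangle^s}\int_{\R^d}|D^\alpha_\eta\hat f_k(\eta)|^2\,d\eta \leq \sum_k\int_{\R^d}|D^\alpha_\eta\hat f_k(\eta)|^2\langle k,\eta\rangle^{2\sigma}e^{2\lambda\langle k,\eta\rangle^s}\,d\eta = \|v^\alpha f\|_{\lambda,\sigma;s}^2$ (using that the weight is increasing in $|\eta|$ and that $D^\alpha_\eta\hat f_k$ is the Fourier coefficient of $v^\alpha f$ up to a harmless constant). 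Summing over $|\alpha|\leq M$ yields exactly $\|f\|_{\lambda,\sigma,M;s}^2$ on the right-hand side, completing the estimate.

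I do not expect any serious obstacle here; the only points requiring a little care are (i) justifying the trace inequality $|g(0)| \lesssim \|g\|_{H^M}$ for $M>d/2$ (this is the standard Sobolev embedding $H^M(\R^d)\hookrightarrow C^0_b(\R^d)$, and it is precisely why the hypothesis $M > d/2$ appears), and (ii) making sure the Fourier-side identity $\rho_k = \hat f_k(0)$ is written with the correct normalization constants so that the $(2\pi)^d$ factors match the conventions fixed in the Preliminaries — these contribute only to the implicit constant in $\lesssim$. One should also note that all quantities are finite by hypothesis ($f\in \G^{\lambda,\sigma,M;1/s}$), so the interchange of sum and integral and the evaluation at $\eta=0$ are legitimate.
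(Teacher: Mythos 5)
Your proposal is correct and follows essentially the same route as the paper: both use the identity $\hat\rho_k=\hat f_k(0)$, the Sobolev embedding $H^{M}(\R^d_\eta)\hookrightarrow C^0$ for $M>d/2$ to control the pointwise value $|\hat f_k(0)|^2$ by $\sum_{|\alpha|\leq M}\int|D^\alpha_\eta\hat f_k(\eta)|^2\,d\eta$, and the monotonicity $\langle k\rangle\leq\langle k,\eta\rangle$ to absorb the weight. No substantive differences to report.
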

\begin{proof}
Since $\hat{\rho}_k=\hat{f}_k(0)$, we have
\begin{align}
    \|\rho\|_{\lambda, \sigma;s}&=\sum_{k\in \Z^d}|\hat{f}_k(0)|^2 \langle k\rangle^{2\sigma} e^{2\lambda \langle k \rangle^{s}},\\
    &\lesssim \sum_{|\alpha|\leq M}\sum_{k\in \Z^d}\langle k\rangle^{2\sigma} e^{2\lambda \langle k \rangle^{s}} \int_{\R^d_{\eta}} |D^{\alpha}_{\eta}\hat{f}_k(\eta)|d\eta,\\&
    \lesssim \sum_{|\alpha|\leq M}\sum_{k\in \Z^d}\int_{\R^d_{\eta}}|D^{\alpha}_{\eta}\hat{f}_k(\eta)|\langle k,\eta \rangle^{2\sigma} e^{2\lambda \langle k,\eta \rangle^s} d\eta,\\
    &\lesssim \|f\|_{\lambda, \sigma, M;s},
\end{align}
where the Sobolev embedding $H^{d/2+}(\R^d)\hookrightarrow C^0(\R^d)$ was used in the first inequality.
\end{proof}

\section{Growth of weighted Sobolev norms} \label{growth_weighted_Sobolev}
Let us prove the quantitative bound for the growth of the Sobolev norm of the solution $f$ of the Vlasov-Poisson system which will be used in the proof of Theorem \ref{propagation_regularity}.


\begin{proof}[Proof of Lemma \ref{sobolev_growth}]
By Gronwall's inequality and adding over the indexes $\alpha$ and $\beta$, it is enough to show that 
\begin{equation}
    \dfrac{1}{2}\dfrac{d}{dt}\|D^{\beta}(v^{\alpha}f)\|_{L^2}^2\leq C \sum_{|\alpha|\leq M}\sum_{|\beta|\leq \sigma}\|D^{\beta}(v^{\alpha}f)\|_{L^2}^2\Big(\|F[f]\|_{W^{1,\infty}}+\|\nabla_v f\|_{\infty,M}+1\Big).
\end{equation} 
By the Vlasov-Poisson system, 
\begin{align}
    \dfrac{1}{2}\dfrac{d}{dt}\|D^{\beta}(v^{\alpha}f)\|_{L^2}^2=&\int_{\T^d\times \R^d} D^{\beta}(v^{\alpha}f) D^{\beta}(v^{\alpha}\partial_t f) dxdv,\\
    =&-\int_{\T^d\times \R^d} D^{\beta}(v^{\alpha}f) D^{\beta}(v^{\alpha}(v\cdot \nabla_x  f)) dxdv\\
    &-\int_{\T^d\times \R^d} D^{\beta}(v^{\alpha}f) D^{\beta}(v^{\alpha}(F \cdot \nabla_v f)) dxdv,\nonumber\\
    =:&-E_L-E_{NL},
\end{align} where the first term $E_L$ has the linear contribution $v\cdot \nabla_x f$ coming from the free transport and $E_{NL}$ has the nonlinear contribution $F\cdot \nabla_v f$. Firstly, we estimate the linear term $E_L$. 

\begin{claim}\label{linear_term_sobolev}
\begin{equation}
    |E_{L}|\lesssim \|f\|_{\sigma,M}^2.
\end{equation}
\end{claim}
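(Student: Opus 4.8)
The plan is to estimate $E_L=\int_{\T^d\times\R^d} D^\beta(v^\alpha f)\,D^\beta\big(v^\alpha(v\cdot\nabla_x f)\big)\,dx\,dv$ by commuting the multiplier $v^\alpha$ and the derivative $D^\beta$ past the transport operator and exploiting the antisymmetry of $v\cdot\nabla_x$ in $L^2_{x,v}$. First I would expand $D^\beta(v^\alpha(v\cdot\nabla_x f))$ using the Leibniz rule. Since $v^\alpha v$ is a polynomial of degree $|\alpha|+1$ in $v$, and $D^\beta$ acts on both $x$ and $v$ variables, the only term that is potentially dangerous is the one where all derivatives in $\beta$ land on $f$ and the full weight $v^\alpha v$ is kept, i.e. $v\cdot\nabla_x D^\beta(v^\alpha f)$ up to lower order corrections; every other term trades either a $v$-derivative for a drop in the weight (producing $v^{\alpha'}$ with $|\alpha'|\le|\alpha|$, hence controlled by $\|f\|_{\sigma,M}$) or has strictly fewer $x$-derivatives on $f$.

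The key step is then to observe that the top-order term satisfies
\begin{equation}
    \int_{\T^d\times\R^d} D^\beta(v^\alpha f)\,\big(v\cdot\nabla_x\big) D^\beta(v^\alpha f)\,dx\,dv = \frac12\int_{\T^d\times\R^d} v\cdot\nabla_x\big| D^\beta(v^\alpha f)\big|^2\,dx\,dv = 0,
\end{equation}
because $v$ does not depend on $x$ and $\T^d$ has no boundary. Thus the diagonal contribution vanishes identically, and what remains after the Leibniz expansion is a finite sum of terms of the schematic form $\int D^\beta(v^\alpha f)\cdot (\text{polynomial in }v)\cdot D^{\beta'}\partial_{x_j} D^{\beta''} f$ with $|\beta'|+|\beta''|<|\beta|$ or with a weight of order $\le|\alpha|$; each such term I would bound by Cauchy--Schwarz, absorbing the polynomial weight into the $H^\sigma_{x,v;M}$ norm (which is legitimate since $|\alpha|\le M$ and the extra $v$ factors from commuting $v^\alpha$ with $\nabla_x$ do not increase the polynomial degree beyond what the weighted norm controls, using that one $x$-derivative is always freed up). Summing over $|\alpha|\le M$ and $|\beta|\le\sigma$ gives $|E_L|\lesssim\|f\|_{\sigma,M}^2$.

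The main obstacle I anticipate is purely bookkeeping: carefully organising the Leibniz expansion so that in every non-vanishing term either the $v$-weight has dropped or the number of $x$-derivatives on the innermost $f$ has strictly dropped, so that each factor can be matched against a norm $\|D^{\beta''}(v^{\alpha''}f)\|_{L^2}$ with $|\alpha''|\le M$, $|\beta''|\le\sigma$. The one subtlety worth flagging is that commuting $D^\beta$ (which includes $v$-derivatives) with the weight $v^\alpha$ lowers the polynomial degree, while commuting it with $v\cdot\nabla_x$ can move a $v$-derivative onto the coefficient $v$, again lowering degree; so no term ever exceeds weight $M$, and since $v\cdot\nabla_x f$ carries exactly one $x$-derivative, every term has at most $\sigma$ total derivatives on $f$ when it is not already the vanishing antisymmetric term. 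I would present this as: expand, isolate the antisymmetric term and kill it, then Cauchy--Schwarz on the remainder, concluding $|E_L|\lesssim\|f\|_{\sigma,M}^2$.
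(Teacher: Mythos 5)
Your proposal is correct and takes essentially the same route as the paper: expand by the Leibniz rule, kill the top-order term via $\int_{\T^d\times\R^d} v_i\,D^{\beta}(v^{\alpha}f)\,\partial_{x_i}D^{\beta}(v^{\alpha}f)\,dx\,dv=\tfrac12\int v_i\,\partial_{x_i}\big(D^{\beta}(v^{\alpha}f)\big)^2\,dx\,dv=0$, and bound the remaining terms (where a $v$-derivative has landed on the coefficient $v_i$, freeing up one derivative) by Cauchy--Schwarz against $\|f\|_{\sigma,M}^2$. The only cosmetic difference is that the paper writes $v^{\alpha}(v\cdot\nabla_x f)=\sum_i v_i\,\partial_{x_i}(v^{\alpha}f)$ first, which makes the bookkeeping you flag essentially trivial, but your argument is the same.
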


\begin{proof}
Since,
\begin{align}
    E_{L}=& \int_{\T^d\times \R^d} D^{\beta}(v^{\alpha}f) D^{\beta}(v^{\alpha}(v\cdot \nabla_x  f)) dxdv,\\
    =&\sum_{i=1}^d\sum_{\beta_1+\beta_2=\beta}\binom{\beta}{\beta_1}\int_{\T^d\times \R^d} D^{\beta}(v^{\alpha}f) D^{\beta_1}(v_i)\partial_{x_i}D^{\beta_1}(v^{\alpha} f) dxdv,
\end{align}
there are two options: $D^{\beta_1}(v_i)=1$ or $D^{\beta_1}(v_i)=v_i$. In the former case 
\begin{align}
    \Big| \int_{\T^d\times \R^d} &D^{\beta}(v^{\alpha}f) D^{\beta_1}(v_i)\partial_{x_i}D^{\beta_1}(v^{\alpha} f) dxdv \Big|\\
    \leq& \int_{\T^d\times \R^d} |D^{\beta}(v^{\alpha}f)|^2dxdv +\int_{\T^d\times \R^d}|\partial_{x_i}D^{\beta_1}(v^{\alpha} f)|^2 dxdv,\nonumber\\
    \leq& \sum_{|\alpha|\leq M}\sum_{|\beta|= \sigma}\|D^{\beta}(v^{\alpha}f)\|_{L^2}^2,
\end{align}
since in this case $|\beta_1|\leq \sigma-1$. Otherwise, if $D^{\beta_1}(v_i)=v_i$ then
\begin{align}
     \int_{\T^d\times \R^d} D^{\beta}(v^{\alpha}f) D^{\beta_1}(v_i)\partial_{x_i}D^{\beta_1}(v^{\alpha} f) dxdv=&
     \int_{\T^d\times \R^d} v_i D^{\beta}(v^{\alpha}f) \partial_{x_i}D^{\beta}(v^{\alpha} f) dxdv,\\
     =&\dfrac{1}{2} \int_{\T^d\times \R^d}  v_i \partial_{x_i}\Big(D^{\beta}(v^{\alpha} f)^2\Big) dxdv, \\
     =&0.
\end{align}
    Hence, 
    \begin{equation}
        |E_{L}|\lesssim  \sum_{|\alpha|\leq M}\sum_{|\beta|\leq \sigma}\|D^{\beta}(v^{\alpha}f)\|_{L^2}^2\lesssim  \|f\|_{\sigma,M}^2.
    \end{equation}
\end{proof}

Before moving on to the estimate of the nonlinear term $E_{NL}$, we prove a basic inequality which will be used afterwards. 

\begin{claim} \label{interpolation}
For every $u\in W^{1,\infty}(\T^d\times \R^d)\cap H^{\sigma}(\T^d\times \R^d)$ and $v\in L^{\infty}(\T^d\times \R^d)\cap H^{\sigma}(\T^d\times \R^d)$,
\begin{equation}
    \sum_{|\alpha|\leq \sigma }\|D^{\alpha}(uv)-uD^{\alpha}v\|_{L^2}\leq C\Big(\|\nabla u\|_{\infty}\|v\|_{\sigma-1}+\|u\|_{\sigma}\| v\|_{\infty}\Big), 
\end{equation}
where $C$ is a uniform constant depending on the Sobolev parameter $\sigma$ and the dimension $d$.
\end{claim}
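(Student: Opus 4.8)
The plan is to expand the commutator with the Leibniz rule and estimate term by term. For a multi-index $\alpha$ with $1\le|\alpha|\le\sigma$ write
\begin{equation}
D^{\alpha}(uv)-uD^{\alpha}v=\sum_{\substack{\beta+\gamma=\alpha\\ |\beta|\ge 1}}\binom{\alpha}{\beta}D^{\beta}u\,D^{\gamma}v,
\end{equation}
so by the triangle inequality it suffices to bound $\|D^{\beta}u\,D^{\gamma}v\|_{L^2}$ for each splitting with $|\beta|\ge 1$ and $m:=|\beta|+|\gamma|=|\alpha|\le\sigma$ (hence $|\gamma|\le\sigma-1$). The case $|\alpha|=0$ is trivial since the commutator vanishes, and the case $|\alpha|=1$ is immediate from $\|(\partial_i u)v\|_{L^2}\le\|\nabla u\|_{\infty}\|v\|_{0}\le\|\nabla u\|_{\infty}\|v\|_{\sigma-1}$; so from now on assume $m\ge 2$. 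Throughout, $\sigma$ is taken to be a positive integer, as is implicit elsewhere in this section.

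Next I apply Hölder's inequality with the conjugate exponents $p=\frac{2(m-1)}{|\beta|-1}$ and $q=\frac{2(m-1)}{|\gamma|}$, read as $p=\infty$ when $|\beta|=1$ and $q=\infty$ when $|\gamma|=0$; one checks $\frac1p+\frac1q=\frac12$, giving $\|D^{\beta}u\,D^{\gamma}v\|_{L^2}\le\|D^{\beta}u\|_{L^p}\|D^{\gamma}v\|_{L^q}$. To the first factor I apply the Gagliardo--Nirenberg interpolation inequality componentwise to $\nabla u$ (orders $|\beta|-1$ and $m-1$), and to the second the Gagliardo--Nirenberg inequality to $v$ (orders $|\gamma|$ and $m-1$), obtaining
\begin{equation}
\|D^{\beta}u\|_{L^p}\lesssim_{\sigma,d}\|\nabla u\|_{\infty}^{1-\theta}\|u\|_{H^{m}}^{\theta},\qquad
\|D^{\gamma}v\|_{L^q}\lesssim_{\sigma,d}\|v\|_{\infty}^{1-\phi}\|v\|_{H^{m-1}}^{\phi},
\end{equation}
with $\theta=\frac{|\beta|-1}{m-1}$ and $\phi=\frac{|\gamma|}{m-1}$, so that $\theta+\phi=1$ (the degenerate edge cases $|\beta|=1$ and $|\gamma|=0$ are consistent with the convention for $p,q$).

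Multiplying the two bounds and using $1-\theta=\phi$, $1-\phi=\theta$, the product equals $\big(\|\nabla u\|_{\infty}\|v\|_{H^{m-1}}\big)^{\phi}\big(\|u\|_{H^{m}}\|v\|_{\infty}\big)^{\theta}$, which by the elementary inequality $a^{\phi}b^{\theta}\le\phi a+\theta b\le a+b$ is at most $\|\nabla u\|_{\infty}\|v\|_{H^{m-1}}+\|u\|_{H^{m}}\|v\|_{\infty}\le\|\nabla u\|_{\infty}\|v\|_{\sigma-1}+\|u\|_{\sigma}\|v\|_{\infty}$, since $m\le\sigma$. Summing over the finitely many multi-indices $\alpha$ with $|\alpha|\le\sigma$ and over the splittings $\beta+\gamma=\alpha$ (whose number is controlled by $\sigma$ and $d$) yields the claim with $C=C(\sigma,d)$. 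The only point needing care is the validity of the Gagliardo--Nirenberg inequalities on the mixed domain $\T^d\times\R^d$; this is where I would be most careful, but it follows from the Euclidean case by standard extension/localization arguments and introduces no genuinely new difficulty.
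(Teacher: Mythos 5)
Your proof is correct, and it is the classical Klainerman--Majda/Moser argument: Leibniz expansion of the commutator, H\"older with exponents $p=\tfrac{2(m-1)}{|\beta|-1}$, $q=\tfrac{2(m-1)}{|\gamma|}$, Gagliardo--Nirenberg interpolation between the $L^{\infty}$ and $H^{m-1}$ (resp.\ $H^{m}$) endpoints, and Young's inequality on the resulting product of powers; the exponent bookkeeping ($\tfrac1p+\tfrac1q=\tfrac12$, $\theta+\phi=1$) checks out, and the edge cases $|\beta|=1$, $|\gamma|=0$ are handled consistently. The paper's own proof starts from the same Leibniz expansion but then simply asserts the bound, offering as justification only the Moser product estimate $\|uv\|_{\sigma}\le\|u\|_{\infty}\|v\|_{\sigma}+\|u\|_{\sigma}\|v\|_{\infty}$; note that this product estimate by itself does not deliver the claim, since it would place all $\sigma$ derivatives on $v$ rather than the $\sigma-1$ that the commutator structure (which removes the top-order term $uD^{\alpha}v$) is supposed to buy. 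In other words, your Gagliardo--Nirenberg step is precisely the content the paper leaves implicit, and your write-up is the standard complete version of the argument (as in Majda--Bertozzi, cited as \cite{BM}). Your closing caveat about the mixed domain $\T^d\times\R^d$ is the right one to flag, and it is indeed harmless: the interpolation inequalities with $L^{\infty}$ endpoint hold there by the usual extension/periodization arguments.
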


\begin{proof}
Since for every $u$, $v\in H^{\sigma}(\T^d\times \R^d)$ we have
\begin{equation}
    \|uv\|_{\sigma} \leq \|u\|_{\infty} \|v\|_{\sigma}+\|u\|_{\sigma} \|v\|_{\infty},
\end{equation}
then 
\begin{align}
    \sum_{|\alpha|\leq \sigma }\|D^{\alpha}(uv)-uD^{\alpha}v\|_{L^2}&\leq c_{\alpha}\sum_{\beta}\|D^{\beta}uD^{\alpha-\beta}v\|_{L^2},\\ 
    &\leq C\Big(\|\nabla u\|_{\infty}\|v\|_{\sigma-1}+\|u\|_{\sigma}\| v\|_{\infty}\Big).
\end{align}
\end{proof}

Secondly, we estimate the nonlinear term $E_{NL}$. 

\begin{claim}\label{nonlinear_term_sobolev}
\begin{equation}
    |E_{NL}|\lesssim \|f\|_{\sigma,M}^2\Big(\|F[f]\|_{W^{1,\infty}}+\|\nabla_v f\|_{\infty,M}\Big).
\end{equation}
\end{claim}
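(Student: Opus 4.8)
The plan is to estimate $E_{NL}=\int_{\T^d\times\R^d} D^\beta(v^\alpha f)\,D^\beta(v^\alpha(F\cdot\nabla_v f))\,dxdv$ by expanding the derivatives with the Leibniz rule and isolating the top-order transport term, which is skew-symmetric, exactly as in the treatment of $E_L$. First I would note that $F=F(t,x)$ is independent of $v$, so that $v^\alpha(F\cdot\nabla_v f)=F\cdot\nabla_v(v^\alpha f)-F\cdot(\nabla_v v^\alpha)f$, where the second term only involves monomials $v^{\alpha'}$ with $|\alpha'|\le |\alpha|-1\le M$ and is therefore controlled by $\|F\|_\infty\|\nabla_v f\|_{\infty,M}$ after integrating against $D^\beta(v^\alpha f)$ and using Cauchy-Schwarz together with the trivial bound $\|v^{\alpha'}f\|_{L^2}\lesssim\|f\|_{\sigma,M}$. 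So it remains to handle $\int D^\beta(v^\alpha f)\,D^\beta(F\cdot\nabla_v(v^\alpha f))\,dxdv$ with $g:=v^\alpha f$.

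Next I would split $D^\beta(F\cdot\nabla_v g)$ into the principal term $F\cdot\nabla_v D^\beta g$ plus the commutator $D^\beta(F\cdot\nabla_v g)-F\cdot\nabla_v D^\beta g$. The principal term contributes $\int D^\beta g\,(F\cdot\nabla_v D^\beta g)\,dxdv=\tfrac12\int F\cdot\nabla_v\big((D^\beta g)^2\big)\,dxdv=0$, because $F$ does not depend on $v$ and a $v$-integration by parts kills it (with no boundary term, $g$ being Schwartz-class in $v$). For the commutator, I would apply Claim \ref{interpolation} with $u=F$ and $v=\nabla_v g=\nabla_v(v^\alpha f)$: summing over $|\beta|\le\sigma$ gives
\begin{equation}
\sum_{|\beta|\le\sigma}\|D^\beta(F\cdot\nabla_v g)-F\cdot\nabla_v D^\beta g\|_{L^2}\lesssim \|\nabla F\|_\infty\|\nabla_v g\|_{\sigma-1}+\|F\|_\sigma\|\nabla_v g\|_\infty.
\end{equation}
Here $\|\nabla_v g\|_{\sigma-1}=\|\nabla_v(v^\alpha f)\|_{\sigma-1}\lesssim\|f\|_{\sigma,M}$ (the extra $v^\alpha$ with $|\alpha|\le M$ and one $v$-derivative still lands inside the weighted Sobolev norm), $\|\nabla_v g\|_\infty\lesssim\|\nabla_v f\|_{\infty,M}$, and $\|F\|_\sigma$ must be converted: since $\sigma\ge d/2+6$ and $\widehat{F}(k)=-\widehat W(k)\,ik\,(\widehat\rho_k-\text{mean})$ with $|\widehat W(k)|\lesssim|k|^{-2}$, one gets $\|F\|_\sigma\lesssim\|\rho_f\|_{\sigma-1}\lesssim\|f\|_{\sigma,M}$ via Lemma \ref{spatial_density} (at $\lambda=0$) or a direct Fourier estimate. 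Pairing the commutator against $D^\beta g$ and using Cauchy-Schwarz then yields $|E_{NL}|\lesssim\|f\|_{\sigma,M}^2\big(\|\nabla F\|_\infty+\|F\|_\infty+\|F\|_\sigma+\|\nabla_v f\|_{\infty,M}\big)\lesssim\|f\|_{\sigma,M}^2\big(\|F[f]\|_{W^{1,\infty}}+\|\nabla_v f\|_{\infty,M}\big)$, which is the claim.

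The main obstacle I anticipate is the bookkeeping needed to push the $v^\alpha$ weight through all the derivatives cleanly: one has to be careful that every term produced by Leibniz expansion of $v^\alpha(F\cdot\nabla_v f)$ and of $D^\beta(\cdots)$ is either genuinely top order (hence absorbed by the skew-symmetric cancellation) or has at least one derivative or weight to spare so that it fits into $\|f\|_{\sigma,M}$ or $\|\nabla_v f\|_{\infty,M}$. A secondary point requiring care is the control of $\|F\|_{H^\sigma}$ by $\|f\|_{\sigma,M}$, which is exactly where the hypothesis $\sigma\ge d/2+6$ (giving enough room after losing two derivatives to the elliptic gain from $W$ and one more to pass from $\rho$ to $f$, plus the Sobolev embedding loss) is used; everything else is a routine Cauchy-Schwarz and Leibniz computation.
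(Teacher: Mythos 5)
Your argument is essentially the paper's: the same three ingredients appear in the same roles (the Moser-type commutator bound of Claim \ref{interpolation} with $u=F$, the vanishing of the top-order term $\int F\cdot\nabla_v\big((D^\beta g)^2\big)$ by a $v$-integration by parts using that $F$ is independent of $v$, and the elliptic bound $\|F\|_{\sigma}\lesssim\|\rho\|_{\sigma-1}\lesssim\|f\|_{\sigma,M}$). The one substantive difference is the order of the two Leibniz extractions, and it is not innocent: by pulling out $v^\alpha(F\cdot\nabla_v f)=F\cdot\nabla_v(v^\alpha f)-F\cdot(\nabla_v v^\alpha)f$ \emph{first}, you leave the lower-order piece $F\cdot(\nabla_v v^\alpha)f$ (and likewise the piece $(\nabla_v v^\alpha)f$ inside $\nabla_v g$) still sitting under $D^\beta$, so the product/commutator estimate generates a term $\|F\|_{\sigma}\,\|v^{\alpha'}f\|_{\infty}$ with $|\alpha'|\le M-1$ and \emph{no} $v$-derivative on $f$; this is not controlled by $\|\nabla_v f\|_{\infty,M}$, and bounding it by Sobolev embedding produces $\|f\|_{\sigma,M}^3$, which is not of the claimed Gronwall-able form. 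The paper avoids this by doing the extractions in the opposite order: first the commutator $D^{\beta}(v^{\alpha}F_i\partial_{v_i}f)-F_iD^{\beta}(v^{\alpha}\partial_{v_i}f)$ (so the $L^\infty$ factor is $\|v^\alpha\partial_{v_i}f\|_\infty\le\|\nabla_v f\|_{\infty,M}$), and only then the identity $v^\alpha\partial_{v_i}f=\partial_{v_i}(v^\alpha f)-\alpha_i v^{\alpha-e_i}f$ inside the remaining integral, where $F_i$ is no longer differentiated and the lower-order piece costs only $\|F\|_\infty\|f\|_{\sigma,M}^2$. With that reordering your proof is exactly the paper's.
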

\begin{proof} 
Let us decompose $E_{NL}$ as
\begin{align}
    E_{NL}=& \int_{\T^d\times \R^d} D^{\beta}(v^{\alpha}f) D^{\beta}(v^{\alpha}(F\cdot \nabla_v  f)) dxdv,\\
    =&\sum_{i=1}^d\int_{\T^d\times \R^d} D^{\beta}(v^{\alpha}f) \Big(D^{\beta}(v^{\alpha}F_i \partial_{v_i}  f)-F_i D^{\beta}(v^{\alpha}\partial_{v_i}  f)\Big)dxdv\\
    &+\sum_{i=1}^d\int_{\T^d\times \R^d}  F_i D^{\beta}(v^{\alpha}f) D^{\beta}(v^{\alpha}\partial_{v_i}  f)dxdv,\nonumber\\
    =:&E_{NL}^1+E_{NL}^2.
\end{align}
Firstly, we estimate $E_{NL}^1$ by applying (\ref{interpolation}),
\begin{align}
    |E_{NL}^1|&\lesssim \|D^{\beta}(v^{\alpha}f)\|_{L^2}\Big(\|\nabla F\|_{\infty}\|v^{\alpha}\partial_{v_i}f\|_{\sigma-1}+\|F\|_{\sigma}\| v^{\alpha}\partial_{v_i}f\|_{\infty}\Big),\\
    &\lesssim \|f\|_{\sigma,M}\Big(\|\nabla F\|_{\infty}\|f\|_{\sigma,M}+\|\rho\|_{\sigma-1}\|\nabla_v f\|_{\infty,M} \Big),\\
    &\lesssim \|f\|_{\sigma,M}^2\Big(\|\nabla F\|_{\infty}+\|\nabla_v f\|_{\infty,M} \Big),
\end{align}
where we have used that $\|F\|_{\sigma}\lesssim \|\rho\|_{\sigma-1}$ by elliptic regularity and $\|\rho\|_{\sigma-1}\lesssim \|f\|_{\sigma-1,M}$ by taking $\lambda=0$ in Lemma \ref{spatial_density}. Secondly, we estimate $E_{NL}^2$ by using a commutator estimate,
\begin{align}
    |E_{NL}^2|=&\Big| \sum_{i=1}^d\int_{\T^d\times \R^d}  F_i D^{\beta}(v^{\alpha}f) D^{\beta}(v^{\alpha}\partial_{v_i}  f)dxdv \Big|, \\
    =&\Big| \sum_{i=1}^d\int_{\T^d\times \R^d}  F_i D^{\beta}(v^{\alpha}f) \Big[\partial_{v_i}\Big(D^{\beta}(v^{\alpha} f)\Big)-D^{\beta}(v^{\alpha-i} f)\Big]dxdv \Big|, \\
    =& \dfrac{1}{2}\Big| \sum_{i=1}^d\int_{\T^d\times \R^d}  F_i \partial_{v_i}\Big(D^{\beta}(v^{\alpha} f)^2\Big)dxdv \Big|\\
    &+\Big| \sum_{i=1}^d\int_{\T^d\times \R^d}  F_i D^{\beta}(v^{\alpha}f) D^{\beta}(v^{\alpha-i} f)dxdv \Big|,\nonumber\\
    \lesssim&\sum_{i=1}^d\|F\|_{\infty}\int_{\T^d\times \R^d} \Big( |D^{\beta}(v^{\alpha}f)|^2+ |D^{\beta}(v^{\alpha-i} f)|^2\Big)dxdv,\\
    \lesssim& \|F\|_{\infty} \|f\|_{\sigma,M}^2.
\end{align}
Therefore, 
\begin{equation}
    |E_{NL}|\leq |E_{NL}^1|+|E_{NL}^2|\leq \|f\|_{\sigma,M}^2\Big(\|F[f]\|_{W^{1,\infty}}+\|\nabla_v f\|_{\infty,M}\Big).
\end{equation}

\end{proof}

Consequently, by Claim \ref{linear_term_sobolev} and Claim \ref{nonlinear_term_sobolev}, \begin{equation}
    \dfrac{1}{2}\dfrac{d}{dt}\|D^{\beta}(v^{\alpha}f)\|_{L^2}^2\leq C \|D^{\beta}(v^{\alpha}f)\|_{L^2}^2\Big(\|F[f]\|_{W^{1,\infty}}+\|\nabla_v f\|_{\infty,M}+1\Big).
\end{equation}
\end{proof}


\section{Proof of Theorem \ref{propagation_regularity}: Propagation of Gevrey regularity} \label{section_main}
In the following, we consider an index of regularity $\lambda(t)$ and we show that it can be taken to be a decreasing positive function for which $f(t)\in \G^{\lambda(t),\sigma,M}$ for every $t>0$ and such that its decay is controlled as in Theorem \ref{propagation_regularity}. Indeed, we will show that there exists a continuous positive function $A(t)$ such that $\|f\|_{\lambda, \sigma,M}\leq A(t)$ for every $t$ and \begin{equation}
\lambda(t)= C\exp\Big[-\int_0^t\Big(2A(t)+1\Big)ds\Big]    
\end{equation} 
is such that $f(t)\in \G^{\lambda(t),\sigma,M}$ for every $t>0$.

The proof of Theorem \ref{propagation_regularity} follows by energy estimates. More precisely, we bound \begin{equation}
\dfrac{1}{2}\dfrac{d}{dt}\|f\|_{\lambda, \sigma,M}^2=\dfrac{1}{2}\sum_{|\alpha|\leq M}\dfrac{d}{dt}\|Av^{\alpha}f\|^2_2,    
\end{equation} 
in terms of $\|f\|_{\lambda, \sigma,M}^2$ and $\|f\|_{ \sigma,M}$ in order to conclude using Gronwall's inequality. The main technical difficulty consists in decomposing the nonlinear term $E_{NL}$ which appears after using the Vlasov-Poisson system in $\frac{1}{2}\frac{d}{dt}\|f\|_{\lambda, \sigma,M}^2$. This problem is overcome using elements of the proof of the analogue of Theorem \ref{propagation_regularity} for the 2D Euler system. Indeed, we apply a commutator estimate coming from \cite{LO} and  similar estimates to the ones used in \cite{KV} to improve the estimates in \cite{LO}. 

In a first stage, we decompose the previous derivative in three pieces, a linear contribution $E_L$ coming from the free transport term $v\cdot \nabla_x f$, a nonlinear contribution $E_{NL}$ coming from the nonlinearity $F(t,x) \cdot \nabla_v f$ and a higher order term $CK$ (for Cauchy-Kovalevskaya) which is used to absorb the highest order terms in the estimates. More precisely, 
\begin{align}
\dfrac{1}{2}\sum_{|\alpha|\leq M}\dfrac{d}{dt}\|Av^{\alpha}f\|^2_2 = & \dfrac{1}{2}\sum_{|\alpha|\leq M}\dfrac{d}{dt}\|AD^{\alpha}_{\eta}\hat{f}\|^2_2,\\
= & \dot{\lambda}\sum_{|\alpha|\leq M}\sum_{k\in \Z^d}\int_{\R_{\eta}^d}\langle k,\eta \rangle^s|AD^{\alpha}_{\eta}\hat{f}_k(t,\eta)|^2 d\eta\\
& +\sum_{|\alpha|\leq M}\sum_{k\in \Z^d}\int_{\R_{\eta}^d}AD^{\alpha}_{\eta}\overline{\hat{f}_k(t,\eta)} AD^{\alpha}_{\eta}\partial_t \hat{f}_k(t,\eta) d\eta, \nonumber\\
= & \dot{\lambda}\sum_{|\alpha|\leq M}\sum_{k\in \Z^d}\int_{\R_{\eta}^d} |D^{\alpha}_{\eta}\hat{f}_k(t,\eta)|^2 \langle k,\eta \rangle^{2\sigma+s}e^{2\lambda\langle k,\eta \rangle^s} d\eta\\ 
&+\sum_{|\alpha|\leq M}\sum_{k\in \Z^d}\int_{\R_{\eta}^d}AD^{\alpha}_{\eta}\overline{\hat{f}_k(t,\eta)} AD^{\alpha}_{\eta}[k\cdot \nabla_{\eta}\hat{f}_k(t,\eta)] d\eta\nonumber\\
&-\sum_{|\alpha|\leq M}\sum_{k\in \Z^d}\int_{\R_{\eta}^d}AD^{\alpha}_{\eta}\overline{\hat{f}_k(t,\eta)}\nonumber\\
&\qquad \qquad \qquad\cdot AD^{\alpha}_{\eta}\Big[\sum_{l\in \Z^d_*}\rho_l(t)\widehat{W}(l)l\cdot \eta \hat{f}_{k-l}(t,\eta)\Big] d\eta,\nonumber\\
=: & CK-E_L-E_{NL},
\end{align}
where $CK:=\dot{\lambda}\|f\|_{\lambda,\sigma+s/2,M}^2(t)$. The core of the proof rests on the three claims proved in this section which estimate both terms $E_L$ and $E_{NL}$. At the end of this section we combine all the results to give a proof of Theorem \ref{propagation_regularity}. 

\subsection{Estimate of the linear term $E_{L}$} 

\begin{claim}\label{linear_term_gevrey}
\begin{equation}
    |E_L|\lesssim \lambda\|v^{\alpha}f\|_{\lambda,\sigma+1/2}^2+\|v^{\alpha}f\|_{\lambda,\sigma}^2.
\end{equation}
\end{claim}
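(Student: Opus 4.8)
The plan is to write $E_L$ explicitly and exploit the fact that the free-transport operator $k\cdot\nabla_\eta$ is (essentially) skew-adjoint, so that the "main" contribution vanishes and only commutator terms survive. Concretely, $E_L = \sum_{|\alpha|\le M}\sum_k \int_{\R^d_\eta} A D^\alpha_\eta \overline{\hat f_k}\, A D^\alpha_\eta[k\cdot\nabla_\eta \hat f_k]\, d\eta$. First I would commute $D^\alpha_\eta$ past $k\cdot\nabla_\eta$: since these are both constant-coefficient differential operators in $\eta$, they commute exactly, so $D^\alpha_\eta[k\cdot\nabla_\eta\hat f_k] = k\cdot\nabla_\eta(D^\alpha_\eta\hat f_k)$. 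Thus, writing $g_k := D^\alpha_\eta\hat f_k$, the term becomes $\sum_k\int A\overline{g_k}\, A\,(k\cdot\nabla_\eta g_k)\,d\eta$, and the whole issue is that $A = A_k(t,\eta) = \langle k,\eta\rangle^\sigma e^{\lambda\langle k,\eta\rangle^s}$ depends on $\eta$, so $A$ does not commute with $k\cdot\nabla_\eta$.

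The key step is the commutator computation: $A(k\cdot\nabla_\eta g) = k\cdot\nabla_\eta(Ag) - (k\cdot\nabla_\eta A)g$. Substituting, $\sum_k\int A\overline{g_k}\,A(k\cdot\nabla_\eta g_k)\,d\eta = \sum_k\int A\overline{g_k}\,k\cdot\nabla_\eta(Ag_k)\,d\eta - \sum_k\int A\overline{g_k}\,(k\cdot\nabla_\eta A)\,g_k\,d\eta$. In the first integral I integrate by parts in $\eta$ (no boundary terms since $g_k\in L^2$ decays); moving $k\cdot\nabla_\eta$ onto $A\overline{g_k}$ and using that the integrand is, up to the real part, symmetric, the real part of $\sum_k\int A\overline{g_k}\,k\cdot\nabla_\eta(Ag_k)\,d\eta$ equals $-\tfrac12\sum_k\int |A g_k|^2\, k\cdot\nabla_\eta\big(\text{nothing}\big)$ — more precisely the divergence-free structure $\nabla_\eta\cdot(k\,|Ag_k|^2)$ integrates to zero because $k$ is constant in $\eta$, so that term vanishes entirely (after taking real parts, which is legitimate since $E_L$ appears inside $\frac{d}{dt}\|\cdot\|^2$ and is real). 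Hence $|E_L|$ is controlled by the single commutator term $\big|\sum_k\int |g_k|^2\, A\,(k\cdot\nabla_\eta A)\,d\eta\big|$.

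It remains to bound $A\,|k\cdot\nabla_\eta A|$ pointwise. Computing, $\nabla_\eta A = \big(\sigma\langle k,\eta\rangle^{\sigma-2}\eta + \lambda s\langle k,\eta\rangle^{\sigma+s-2}\eta\big)e^{\lambda\langle k,\eta\rangle^s}$, so $|k\cdot\nabla_\eta A|\lesssim |k|\,|\eta|\big(\sigma\langle k,\eta\rangle^{\sigma-2} + \lambda s\langle k,\eta\rangle^{\sigma+s-2}\big)e^{\lambda\langle k,\eta\rangle^s} \lesssim \big(\langle k,\eta\rangle^{\sigma} + \lambda\langle k,\eta\rangle^{\sigma+s}\big)e^{\lambda\langle k,\eta\rangle^s}$, using $|k|\,|\eta|\le\langle k,\eta\rangle^2$ and $s\le 1$. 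Therefore $A\,|k\cdot\nabla_\eta A|\lesssim \langle k,\eta\rangle^{2\sigma}e^{2\lambda\langle k,\eta\rangle^s}\big(1 + \lambda\langle k,\eta\rangle^{s}\big)$, and consequently
\begin{equation}
|E_L|\lesssim \sum_k\int_{\R^d_\eta}|D^\alpha_\eta\hat f_k|^2\langle k,\eta\rangle^{2\sigma}e^{2\lambda\langle k,\eta\rangle^s}\,d\eta + \lambda\sum_k\int_{\R^d_\eta}|D^\alpha_\eta\hat f_k|^2\langle k,\eta\rangle^{2\sigma+s}e^{2\lambda\langle k,\eta\rangle^s}\,d\eta,
\end{equation}
which is precisely $\|v^\alpha f\|_{\lambda,\sigma}^2 + \lambda\|v^\alpha f\|_{\lambda,\sigma+1/2}^2$ by the definition of the norms (the $\langle k,\eta\rangle^{s}$ weight corresponding to raising the Sobolev correction by $s/2$, and $s\le 1$ so $\langle k,\eta\rangle^{s/2}\lesssim\langle k,\eta\rangle^{1/2}\langle k,\eta\rangle^{(s-1)/2}\le\langle k,\eta\rangle^{1/2}$; alternatively one keeps $\sigma+s/2$ throughout and notes it is $\le\sigma+1/2$). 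The only mildly delicate point — the main obstacle — is justifying that the symmetric (non-commutator) part genuinely drops out: one must take real parts carefully and confirm the integration-by-parts produces no boundary contribution, which follows from the rapid decay of $\hat f_k(\eta)$ and its $\eta$-derivatives guaranteed by $f\in H^\sigma_{x,v;M}$ together with the finiteness of the Gevrey norm being propagated.
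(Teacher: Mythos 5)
Your argument is correct and is essentially the paper's own proof: both exploit the skew-adjointness of $k\cdot\nabla_\eta$ by writing $\overline{g}\,k\cdot\nabla_\eta g$ as a perfect derivative of $|g|^2$, integrate by parts so that only the term with $k\cdot\nabla_\eta A$ survives, and then bound $|k|\,|\eta|\langle k,\eta\rangle^{\sigma-2}$ and $\lambda|k|\,|\eta|\langle k,\eta\rangle^{\sigma+s-2}$ pointwise. The only cosmetic difference is that the paper lands on $\lambda\|v^\alpha f\|_{\lambda,\sigma+s/2}^2$ (which is what is actually absorbed by the $CK$ term later), while you pass to the weaker exponent $\sigma+1/2$ as in the claim's literal statement; both are fine.
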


\begin{proof}
We estimate $E_L$ using a commutator estimate as follows,
\begin{align}
    E_L&=-\sum_{k\in \Z^d}\int_{\R_{\eta}^d}AD^{\alpha}_{\eta}\overline{\hat{f}_k(t,\eta)} AD^{\alpha}_{\eta}[k\cdot \nabla_{\eta}\hat{f}_k(t,\eta)] d\eta,\\
    &=-\sum_{i=1}^d\sum_{k\in \Z^d}\int_{\R_{\eta}^d} \partial_{\eta_i}\Big(\dfrac{|D^{\alpha}_{\eta}\hat{f}_k(t,\eta)|^2}{2}\Big)A^2k_i d\eta,\\
    &=\sum_{i=1}^d\sum_{k\in \Z^d}\int_{\R_{\eta}^d} |D^{\alpha}_{\eta}\hat{f}_k(t,\eta)|^2A\partial_{\eta^i}Ak_i d\eta, \\ &=\sum_{i=1}^d\sum_{k\in \Z^d}\int_{\R_{\eta}^d} |D^{\alpha}_{\eta}\hat{f}_k(t,\eta)|^2A(\sigma\eta_i\langle k,\eta \rangle^{\sigma-2}e^{\lambda\langle k,\eta \rangle^s}\\
    &\qquad +\lambda s\eta_i\langle k,\eta \rangle^{\sigma+s-2}e^{\lambda\langle k,\eta \rangle^s})k_i d\eta.\nonumber
\end{align}

Consequently, 
\begin{align}
    |E_L|\lesssim & \sum_{i=1}^d\sum_{k\in \Z^d}\int_{\R_{\eta}^d} |D^{\alpha}_{\eta}\hat{f}_k(t,\eta)|^2 \langle k,\eta \rangle^{2\sigma}e^{2\lambda\langle k,\eta \rangle^s} d\eta\\
    &+\lambda\sum_{i=1}^d\sum_{k\in \Z^d}\int_{\R_{\eta}^d} |D^{\alpha}_{\eta}\hat{f}_k(t,\eta)|^2 \langle k,\eta \rangle^{2\sigma+s}e^{2\lambda\langle k,\eta \rangle^{s}} d\eta,\nonumber\\
    \lesssim & \|v^{\alpha}f\|^2_{\lambda,\sigma}+\lambda\|v^{\alpha}f\|^2_{\lambda,\sigma+s/2}.
\end{align}
\end{proof}
\subsection{Estimate of the nonlinear term $E_{NL}$} We decompose $E_{NL}$ after applying $D^{\alpha}_{\eta}$ to the nonlinearity term of the Vlasov-Poisson system. Hence,

\begin{align}
E_{NL}=&\sum_{k\in \Z^d}\int_{\R_{\eta}^d}AD^{\alpha}_{\eta}\overline{\hat{f}_k(t,\eta)} A \Big[\sum_{l\in \Z^d_*}\rho_l(t)\widehat{W}(l)l\cdot \eta D^{\alpha}_{\eta}\hat{f}_{k-l}(t,\eta)\Big] d\eta\\
&+\sum_{k\in \Z^d}\int_{\R_{\eta}^d}AD^{\alpha}_{\eta}\overline{\hat{f}_k(t,\eta)} A\Big[\sum_{|j|=1, j\leq \alpha}\sum_{l\in \Z^d_*}\rho_l(t)\widehat{W}(l)l_j D^{\alpha-j}_{\eta}\hat{f}_{k-l}(t,\eta)\Big] d\eta\nonumber,\\
=&:E_{NL}^1+E_{NL}^2,
\end{align}
where the first term has one derivative in $\eta$ more than the second one. Now, we proceed to estimate the non-linear contribution $E_{NL}^1$ where all the technical difficulties are concentrated. Actually, this is the point where we use the aforementioned commutator estimate. Indeed, since $E_{NL}^2$ is a lower order term, the estimates are less involved.

\begin{claim}\label{first_nonlinear_term_gevrey}
\begin{equation}
    |E_{NL}^1|\lesssim\|f\|_{\lambda, \sigma+s/2,M}^2\Big(\lambda^2\|f\|_{\lambda,\sigma,M}+\lambda\|f\|_{\sigma,M}\Big)+\|f\|_{\lambda,\sigma,M}^2\|f\|_{\sigma,M}\Big(\lambda^2+1\Big).
\end{equation}
\end{claim}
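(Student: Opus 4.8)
The plan is to estimate $E_{NL}^1$ by splitting the Fourier multiplier $A_k(\eta)$ acting on the product into a part that can be distributed onto the two factors plus a commutator, and then feed the commutator term into a multilinear estimate using the triangle inequality $\langle k,\eta\rangle^s\lesssim \langle l\rangle^s+\langle k-l,\eta\rangle^s$. Concretely, writing $E_{NL}^1 = \sum_k\int AD^\alpha_\eta\overline{\hat f_k}\,A\big[\sum_l \rho_l\widehat W(l)\,l\cdot\eta\,D^\alpha_\eta\hat f_{k-l}\big]\,d\eta$, I would insert the identity $A_k(\eta) = \big(A_k(\eta) - A_{k-l}(\eta)\big) + A_{k-l}(\eta)$ inside the $l$-sum. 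The term with $A_{k-l}(\eta)$ pairs cleanly: $l\cdot\eta$ gets absorbed by trading one power of $\langle k-l,\eta\rangle$ (bounded since $\eta$ is a component of the bracket) together with a factor $\langle l\rangle$, and then Young's inequality (Lemma \ref{young}, part (1)) applied to $f^1 = AD^\alpha_\eta\hat f$, $f^2 = \langle k,\eta\rangle A D^\alpha_\eta\hat f$ (i.e. with one extra derivative, giving the $\|f\|_{\lambda,\sigma+s/2,M}$ or $\|f\|_{\lambda,\sigma,M}$ factors), and $r_l = \langle l\rangle^{\sigma'}|\rho_l\widehat W(l)|$, closes it; using $|\widehat W(l)|\lesssim |l|^{-2}$ one recovers a genuine Gevrey norm of $\rho$, which by Lemma \ref{spatial_density} (or with $\lambda=0$) is controlled by $\|f\|_{\lambda,\sigma,M}$ or $\|f\|_{\sigma,M}$.

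For the commutator term, the key point is the pointwise bound on $A_k(\eta) - A_{k-l}(\eta)$. I would use the elementary inequality (this is the commutator estimate attributed to \cite{LO}): for $s\in(0,1]$,
\begin{equation}
\big|\langle k,\eta\rangle^s - \langle k-l,\eta\rangle^s\big| \lesssim \langle l\rangle^s,
\end{equation}
and more precisely, writing $A_k(\eta) = \langle k,\eta\rangle^\sigma e^{\lambda\langle k,\eta\rangle^s}$, a mean value / convexity argument gives
\begin{equation}
|A_k(\eta) - A_{k-l}(\eta)| \lesssim \lambda\langle l\rangle \big(\langle k,\eta\rangle^{\sigma+s-1} + \langle k-l,\eta\rangle^{\sigma+s-1}\big) e^{\lambda(\langle k,\eta\rangle^s \vee \langle k-l,\eta\rangle^s)} + \langle l\rangle\big(\langle k,\eta\rangle^{\sigma-1}+\langle k-l,\eta\rangle^{\sigma-1}\big)e^{\cdots},
\end{equation}
and then $e^{\lambda\langle k,\eta\rangle^s}\le e^{\lambda\langle l\rangle^s}e^{\lambda\langle k-l,\eta\rangle^s}$ moves the bad exponential onto the density factor, where the extra $e^{\lambda\langle l\rangle^s}$ is exactly what upgrades $\rho$ to its $\frac1s$-Gevrey norm $\|\rho\|_{\lambda,\sigma';s}\lesssim\|f\|_{\lambda,\sigma,M;s}$. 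After this splitting, the $\langle l\rangle$ from the commutator together with $|\widehat W(l)|\lesssim|l|^{-2}$ and the $l\cdot\eta$ from the transport term must be balanced against one unit of regularity on each of the two $f$-factors; the $\lambda^2$ and $\lambda$ prefactors in the claim come from the two pieces of the commutator (the $\lambda\langle l\rangle$ piece times an $A$ still carrying $e^{\lambda\langle\cdot\rangle^s}$ on the $f$-side and a $\lambda$-free $\rho$-side, versus the piece feeding into $\lambda\|f\|_{\sigma,M}$), and the clean "$\lambda^2+1$" term collects the contribution where both $A$'s survive on the $f$'s. I would organize the bookkeeping so that in every term, after using Young, one factor is $\|f\|_{\lambda,\sigma+s/2,M}$ or $\|f\|_{\lambda,\sigma,M}$, the second is the same type, and the density factor is either $\|f\|_{\lambda,\sigma,M}$ (carrying $\lambda$'s) or $\|f\|_{\sigma,M}$ (no $\lambda$).

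The main obstacle is the commutator estimate on $A_k(\eta)-A_{k-l}(\eta)$: one must extract the full power $\langle l\rangle$ (not just $\langle l\rangle^s$) so that $|\widehat W(l)|\,|l\cdot\eta|\lesssim |l|^{-1}\langle k-l,\eta\rangle$ leaves enough room, while simultaneously keeping the exponential weights matched on both sides and tracking the powers of $\lambda$. This requires the Sobolev correction $\sigma\ge d/2+6$ so that after losing derivatives to the commutator, to the potential, and to the two applications of Young's inequality, enough regularity remains to close the $L^2$ estimates; I would check the arithmetic of the derivative count carefully at the end. The remaining term $E_{NL}^2$ is strictly lower order (one fewer $\eta$-derivative) and is handled by the same scheme with room to spare, which is presumably the content of the next claim.
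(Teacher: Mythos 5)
Your treatment of the commutator piece $A_k-A_{k-l}$ is broadly in the spirit of the paper's argument (mean value theorem on the polynomial and exponential parts of $A$, the bound $|\langle k,\eta\rangle^s-\langle k-l,\eta\rangle^s|\lesssim \langle l\rangle/(\langle k,\eta\rangle^{1-s}+\langle k-l,\eta\rangle^{1-s})$, and $e^{\lambda\langle k,\eta\rangle^s}\le e^{\lambda\langle l\rangle^s}e^{\lambda\langle k-l,\eta\rangle^s}$ to push the exponential onto $\rho$, followed by Young's inequality). But there is a genuine gap in your treatment of the remaining term, the one in which $A_{k-l}$ lands on the second factor. You claim it ``pairs cleanly'': you bound $|l\cdot\eta|\le\langle l\rangle\langle k-l,\eta\rangle$ and apply Young with $f^2=\langle k,\eta\rangle AD^\alpha_\eta\hat f$, asserting this yields $\|f\|_{\lambda,\sigma+s/2,M}$ or $\|f\|_{\lambda,\sigma,M}$. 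It does not: that extra full power of $\langle k-l,\eta\rangle$ produces $\|v^\alpha f\|_{\lambda,\sigma+1}$, and since $s/2<1$ for every $s\in(0,1]$ this is strictly stronger than the $\sigma+s/2$ norm controlled by the Cauchy--Kovalevskaya term $\dot\lambda\|f\|^2_{\lambda,\sigma+s/2,M}$. Moreover, every $\|f\|^2_{\lambda,\sigma+s/2,M}$ contribution in the claimed bound carries a factor of $\lambda$ or $\lambda^2$, which is exactly what allows it to be absorbed by $\dot\lambda$ in the Gronwall step; your term would carry no such factor. So this piece cannot be closed by a direct multilinear estimate.

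The paper's resolution is structural, not analytic: because $F$ does not depend on $v$, one has $\int F\cdot\nabla_v\big(Av^\alpha f\big)^2\,dv\,dx=0$, which in Fourier variables says precisely that the $A_{k-l}$ term vanishes identically (it is the symmetric part $\sum_{k,l}\int \overline{g_k}\,\rho_l\widehat W(l)\,l\cdot\eta\,g_{k-l}\,d\eta$ with $g=AD^\alpha_\eta\hat f$, i.e.\ a perfect $v$-divergence). Hence $E_{NL}^1$ reduces entirely to the commutator term, and only then does the splitting of $A_k-A_{k-l}$ and the bookkeeping of $\lambda$-powers that you outline become sufficient. Without invoking this exact cancellation your plan does not prove the claim.
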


\begin{proof}
To deal with $E^1_{NL}$, we make use of the fact that the force $F$ does not depend on $x$, so that the nonlinearity has a divergence structure. In other words, we use the identity 
\begin{equation}
\int_{\R^d_{x}\times \R^d_{v}}F(t,x)\cdot \Big[\nabla_v\cdot \Big(Av^{\alpha}f\Big)^2\Big] dvdx=0.    
\end{equation} 
In particular,
\begin{align}
E_{NL}^1&=\sum_{k\in \Z^d}\int_{\R_{\eta}^d}AD^{\alpha}_{\eta}\overline{\hat{f}_k(t,\eta)} A \Big[\sum_{l\in \Z^d_*}\rho_l(t)\widehat{W}(l)l\cdot \eta D^{\alpha}_{\eta}\hat{f}_{k-l}(t,\eta)\Big] d\eta,\\
&=\sum_{k\in \Z^d}\sum_{l\in \Z^d_*}\int_{\R_{\eta}^d}AD^{\alpha}_{\eta}\overline{\hat{f}_k(t,\eta)} (A_k(t,\eta)-A_{k-l}(t,\eta)) \rho_l(t)\widehat{W}(l)l\cdot \eta D^{\alpha}_{\eta}\hat{f}_{k-l}(t,\eta) d\eta.
\end{align}

We decompose $E_{NL}^1$ in the two terms $E_{NL}^{1,1}$ and $E_{NL}^{1,2}$ which are given after using the identity, 
\begin{align}
    A_{k}(t,\eta)-A_{k-l}(t,\eta)=&\langle k,\eta \rangle^\sigma e^{\lambda\langle k,\eta \rangle^s}-\langle k-l,\eta \rangle^\sigma e^{\lambda\langle k-l,\eta \rangle^s},\\ =&(\langle k,\eta \rangle^\sigma-\langle k-l,\eta \rangle^\sigma)e^{\lambda\langle k-l,\eta \rangle^s}\\
    &+(e^{\lambda\langle k,\eta \rangle^s}-e^{\lambda\langle k-l,\eta \rangle^s})\langle k,\eta \rangle^\sigma,\nonumber
\end{align}   
in the same order as stated. By the mean value theorem,
\begin{align}
    \langle k,\eta \rangle^\sigma-\langle k-l,\eta \rangle^\sigma=&\sigma(\langle k,\eta \rangle-\langle k-l,\eta \rangle)\\
    & \cdot(\langle k-l,\eta \rangle+\theta_{k,l,\eta} (\langle k,\eta \rangle-\langle k-l,\eta \rangle))^{\sigma-1},\nonumber\\
    \leq& \sigma\langle l \rangle(\langle k-l,\eta \rangle+\langle l \rangle)^{\sigma-1},\\\leq& \sigma\langle l \rangle^\sigma+C\sigma\langle l \rangle \langle k-l,\eta \rangle(\langle l \rangle^{\sigma-2}+\langle k-l,\eta \rangle^{\sigma-2}),\\\leq & \sigma\langle l \rangle^\sigma+C\sigma \langle l \rangle^{\sigma-1}\langle k-l,\eta \rangle+C\sigma\langle l \rangle\langle k-l,\eta \rangle^{\sigma-1},
\end{align}
which is used several times in this section. We estimate $E_{NL}^{1,1}$ using the previous inequality,
\begin{align}
    |E_{NL}^{1,1}|\lesssim & \sum_{k,l}\int_{\R_{\eta}^d}\langle k,\eta \rangle^{\sigma}e^{\lambda \langle k,\eta \rangle^s}|D^{\alpha}_{\eta}\hat{f}_k(t,\eta)|\langle l \rangle^{\sigma-1} |\rho_l|\langle k-l,\eta \rangle e^{\lambda\langle k-l,\eta \rangle^s}|\eta| \\
    &\qquad \qquad \qquad \cdot  |D^{\alpha}_{\eta}\hat{f}_{k-l}(t,\eta)|d\eta \nonumber\\
    &+\sum_{k,l}\int_{\R_{\eta}^d}\langle k,\eta \rangle^{\sigma}e^{\lambda \langle k,\eta \rangle^s}|D^{\alpha}_{\eta}\hat{f}_k(t,\eta)|\langle l \rangle^{\sigma-2} |\rho_l|\langle k-l,\eta \rangle^2 e^{\lambda\langle k-l,\eta \rangle^s}\nonumber\\
    &\qquad \qquad \qquad \cdot |D^{\alpha}_{\eta}\hat{f}_{k-l}(t,\eta)|d\eta\nonumber\\
    &+\sum_{k,l}\int_{\R_{\eta}^d}\langle k,\eta \rangle^{\sigma}e^{\lambda \langle k,\eta \rangle^s}|D^{\alpha}_{\eta}\hat{f}_k(t,\eta)| |\rho_l|\langle k-l,\eta \rangle^{\sigma}e^{\lambda\langle k-l,\eta \rangle^s} |D^{\alpha}_{\eta}\hat{f}_{k-l}(t,\eta)|d\eta.\nonumber
\end{align}

Moreover, since $e^x\leq e+x^2e^x$ for every $x\geq 0$, then we can apply $e^{\lambda\langle k-l,\eta \rangle^s}\leq e+\lambda^2\langle k-l,\eta \rangle^{2s}e^{\lambda \langle k-l,\eta \rangle^s}$ in the first two terms. Hence, by Young's inequality
\begin{align}
|E_{NL}^{1,1}| \lesssim & \|v^{\alpha}f\|_{\lambda, \sigma} \|\rho\|_{\sigma-1} \|v^{\alpha}f\|_{d/2+2}+\lambda^2\|v^{\alpha}f\|_{\lambda, \sigma} \|\rho\|_{\sigma-1} \|v^{\alpha}f\|_{\lambda,d/2+5}\\ &+\|v^{\alpha}f\|_{\lambda, \sigma} \|\rho\|_{\sigma-2} \|v^{\alpha}f\|_{d/2+3}+\lambda^2\|v^{\alpha}f\|_{\lambda, \sigma} \|\rho\|_{\sigma-2} \|v^{\alpha}f\|_{\lambda, d/2+6}\nonumber\\
&+\|v^{\alpha}f\|_{\lambda, \sigma} \|\rho\|_{d/2+1} \|v^{\alpha}f\|_{\lambda, \sigma}.\nonumber
\end{align}
Furthermore, since $\sigma\geq d/2+6$
\begin{align}
|E_{NL}^{1,1}| &\lesssim \|v^{\alpha}f\|_{\lambda, \sigma}\|\rho\|_{ \sigma}\|v^{\alpha}f\|_{\sigma}+\lambda^2\|v^{\alpha}f\|_{\lambda, \sigma}^2\|\rho\|_{ \sigma}+\|v^{\alpha}f\|_{\lambda, \sigma}^2\|\rho\|_{\sigma}.
\end{align}

Now, we proceed to bound the term $E_{NL}^{1,2}$. We decompose the term $E_{NL}^{1,2}$ using 
\begin{align}
e^{\lambda\langle k,\eta \rangle^s}-e^{\lambda\langle k-l,\eta \rangle^s}=&(e^{\lambda(\langle k,\eta \rangle^s-\langle k-l,\eta \rangle^s)}-1-\lambda(\langle k,\eta \rangle^s-\langle k-l,\eta \rangle^s))e^{\lambda \langle k-l,\eta \rangle^s}\\
&+\lambda(\langle k,\eta \rangle^s-\langle k-l,\eta \rangle^s)\langle k-l,\eta \rangle^{\sigma-s/2}e^{\lambda \langle k-l,\eta \rangle^s}\langle k,\eta \rangle^{-(\sigma-s/2)}\nonumber\\
&+\lambda(\langle k,\eta \rangle^s-\langle k-l,\eta \rangle^s)(\langle k,\eta \rangle^{\sigma-s/2}-\langle k-l,\eta \rangle^{\sigma-s/2})\nonumber\\
    &\qquad \qquad \cdot e^{\lambda \langle k-l,\eta \rangle^s}\langle k,\eta \rangle^{-(\sigma-s/2)}.  \nonumber
\end{align}
Hence, we call $A$, $B$ and $C$ to the three terms in the previous identity which decompose $E_{NL}^{1,2}$. To bound $|A|$, we use the inequality $|e^x-1-x|\leq x^2e^{|x|}$ so that
\begin{align}
    |A|\lesssim & \lambda^2 \sum_{k,l}\int_{\R_{\eta}^d}\langle k,\eta \rangle^{2\sigma}e^{\lambda \langle k,\eta \rangle^s}|D^{\alpha}_{\eta}\hat{f}_k(t,\eta)|\langle l \rangle e^{\lambda \langle l \rangle^s}|\rho_l|e^{\lambda\langle k-l,\eta \rangle^s}\\
    &\qquad \qquad \qquad \cdot|\langle k,\eta \rangle^s-\langle k-l,\eta \rangle^s||\eta| |D^{\alpha}_{\eta}\hat{f}_{k-l}(t,\eta)|d\eta.\nonumber
\end{align}    
Now, we apply the mean value theorem, the inequality 
\begin{equation}
|\langle k,\eta \rangle^s-\langle k-l,\eta \rangle^s|\leq \dfrac{\langle l \rangle}{\langle k,\eta \rangle^{1-s}+\langle k-l,\eta \rangle^{1-s}},    
\end{equation} 
in the first term, and the triangle inequality $\langle k,\eta \rangle^{s} -\langle k-l,\eta \rangle^{s}\leq \langle s \rangle^{s}$ to the rest. Therefore, 
    \begin{align}
    |A|\lesssim &\lambda^2 \sum_{k,l}\int_{\R_{\eta}^d}\langle k,\eta \rangle^{\sigma+s/2}e^{\lambda \langle k,\eta \rangle^s}|D^{\alpha}_{\eta}\hat{f}_k(t,\eta)|\langle l \rangle e^{\lambda \langle l \rangle^s}|\rho_l|\langle k-l,\eta \rangle^{\sigma+3s/2-1}\\
    &\qquad \qquad \qquad \cdot e^{\lambda\langle k-l,\eta \rangle^s} |D^{\alpha}_{\eta}\hat{f}_{k-l}(t,\eta)|d\eta\nonumber\\
    &+\lambda^2 \sum_{k,l}\int_{\R_{\eta}^d}\langle k,\eta \rangle^{\sigma+s/2}e^{\lambda \langle k,\eta \rangle^s}|D^{\alpha}_{\eta}\hat{f}_k(t,\eta)|\langle l \rangle^{\sigma+3s/2-1} e^{\lambda \langle l \rangle^s}|\rho_l|\langle k-l,\eta \rangle\nonumber \\
    &\qquad \qquad \qquad \cdot e^{\lambda\langle k-l,\eta \rangle^s}|D^{\alpha}_{\eta}\hat{f}_{k-l}(t,\eta)|d\eta\nonumber
\end{align}
\begin{align}    
    &+\lambda^2 \sum_{k,l}\int_{\R_{\eta}^d}\langle k,\eta \rangle^{\sigma+s/2}e^{\lambda \langle k,\eta \rangle^s}|D^{\alpha}_{\eta}\hat{f}_k(t,\eta)|\langle l \rangle^{\sigma+3s/2-2} e^{\lambda \langle l \rangle^s}|\rho_l|\langle k-l,\eta \rangle^2\nonumber\\
    &\qquad \qquad \qquad \cdot e^{\lambda\langle k-l,\eta \rangle^s} |D^{\alpha}_{\eta}\hat{f}_{k-l}(t,\eta)|d\eta\nonumber\\
    &+\lambda^2 \sum_{k,l}\int_{\R_{\eta}^d}\langle k,\eta \rangle^{\sigma+s/2}e^{\lambda \langle k,\eta \rangle^s}|D^{\alpha}_{\eta}\hat{f}_k(t,\eta)|\langle l \rangle^{2s} e^{\lambda \langle l \rangle^s}|\rho_l|\langle k-l,\eta \rangle^{\sigma-s/2}\nonumber\\
    &\qquad \qquad \qquad \cdot e^{\lambda\langle k-l,\eta \rangle^s} |D^{\alpha}_{\eta}\hat{f}_{k-l}(t,\eta)|d\eta,\nonumber\\
    \lesssim & \lambda^2\Big(\|v^{\alpha}f\|_{\lambda,\sigma+s/2}^2\|\rho\|_{\lambda,d/2+2}+\|v^{\alpha}f\|_{\lambda,\sigma+s/2}\|\rho\|_{\lambda,\sigma+s/2}\|v^{\alpha}f\|_{\lambda,\sigma+d/2+3} \\
    &+\|v^{\alpha}f\|_{\lambda,\sigma}\|\rho\|_{\lambda,\sigma}\|v^{\alpha}f\|_{\lambda,d/2+3}+\|v^{\alpha}f\|_{\lambda,\sigma}^2\|\rho\|_{\lambda,d/2+3}\Big),\nonumber
\end{align}
where we have used that $3s/2-1\leq s/2$ in the first two terms. Moreover, since $\sigma\geq d/2+6$ 
\begin{equation}
    |A|\lesssim  \lambda^2\Big( \|v^{\alpha}f\|_{\lambda,\sigma+s/2}^2\|\rho\|_{\lambda,\sigma}+\|v^{\alpha}f\|_{\lambda,\sigma+s/2}\|\rho\|_{\lambda,\sigma+s/2}\|v^{\alpha}f\|_{\lambda,\sigma}+ \|v^{\alpha}f\|_{\lambda,\sigma}^2\|\rho\|_{\lambda,\sigma}\Big).
\end{equation} 

Now, we bound $|B|$ using Young's inequality
\begin{align}
    |B|&\lesssim \lambda \sum_{k,l}\int_{\R_{\eta}^d}\langle k,\eta \rangle^{\sigma+s/2} e^{\lambda \langle k,\eta \rangle^s}|D^{\alpha}_{\eta}\hat{f}_k(t,\eta)|\langle l \rangle|\rho_l|\langle k-l,\eta \rangle^{\sigma+3s/2-1} \\
    &\qquad \qquad \qquad \cdot e^{\lambda\langle k-l,\eta \rangle^s} |D^{\alpha}_{\eta}\hat{f}_{k-l}(t,\eta)|d\eta,\nonumber\\
    &\lesssim \lambda \sum_{k,l}\int_{\R_{\eta}^d}\langle k,\eta \rangle^{\sigma+s/2} e^{\lambda \langle k,\eta \rangle^s}|D^{\alpha}_{\eta}\hat{f}_k(t,\eta)|\langle l \rangle|\rho_l|\langle k-l,\eta \rangle^{\sigma+s/2} \\
    &\qquad \qquad \qquad \cdot e^{\lambda\langle k-l,\eta \rangle^s} |D^{\alpha}_{\eta}\hat{f}_{k-l}(t,\eta)|d\eta,\nonumber\\
    &\lesssim \lambda \|v^{\alpha}f\|_{\lambda,\sigma+s/2}^2\|\rho\|_{d/2+1}.
\end{align}
Moreover, since $\sigma\geq d/2+6$ 
\begin{equation}
    |B|\lesssim \lambda \|v^{\alpha}f\|_{\lambda,\sigma+s/2}^2\|\rho\|_{\sigma}.
\end{equation}
Now, we bound $|C|$, the last term in the decomposition for $E_{NL}^1$. By the mean value theorem and Young's inequality,
\begin{align}
    |C| \leq &\lambda \sum_{k,l}\int_{\R_{\eta}^d}\langle k,\eta \rangle^{\sigma+s/2}e^{\lambda \langle k,\eta \rangle^s}|D^{\alpha}_{\eta}\hat{f}_k(t,\eta)|\langle l \rangle^{s-1}|\rho_l| |\langle k,\eta \rangle^{\sigma-s/2}-\langle k-l,\eta \rangle^{\sigma-s/2}|\\
    &\qquad \qquad \qquad \cdot e^{\lambda\langle k-l,\eta \rangle^s}|\eta| |D^{\alpha}_{\eta}\hat{f}_{k-l}(t,\eta)|d\eta,\nonumber\\
    \lesssim &\lambda \sum_{k,l}\int_{\R_{\eta}^d}\langle k,\eta \rangle^{\sigma+s/2}e^{\lambda \langle k,\eta \rangle^s}|D^{\alpha}_{\eta}\hat{f}_k(t,\eta)|\langle l \rangle^{\sigma}|\rho_l|e^{\lambda\langle k-l,\eta \rangle^s}\langle k-l,\eta \rangle \\
    &\qquad \qquad \qquad \cdot |D^{\alpha}_{\eta}\hat{f}_{k-l}(t,\eta)|d\eta \nonumber
\end{align}
\begin{align}     
    &+ \lambda \sum_{k,l}\int_{\R_{\eta}^d}\langle k,\eta \rangle^{\sigma+s/2}e^{\lambda \langle k,\eta \rangle^s}|D^{\alpha}_{\eta}\hat{f}_k(t,\eta)|\langle l  \rangle^{\sigma}|\rho_l|\langle k-l,\eta \rangle^2 e^{\lambda\langle k-l,\eta \rangle^s}\nonumber\\
    &\qquad \qquad \qquad \cdot |D^{\alpha}_{\eta}\hat{f}_{k-l}(t,\eta)|d\eta, \nonumber\\
    &+ \lambda \sum_{k,l}\int_{\R_{\eta}^d}\langle k,\eta \rangle^{\sigma+s/2}e^{\lambda \langle k,\eta \rangle^s}|D^{\alpha}_{\eta}\hat{f}_k(t,\eta)|\langle l\rangle|\rho_l|\langle k-l,\eta \rangle^{\sigma}e^{\lambda\langle k-l,\eta \rangle^s}\nonumber\\
    &\qquad \qquad \qquad \cdot |D^{\alpha}_{\eta}\hat{f}_{k-l}(t,\eta)|d\eta,\nonumber\\ 
    \lesssim &  \lambda\Big(\|v^{\alpha}f\|_{\lambda,\sigma+s/2}\|\rho\|_{\sigma}\|v^{\alpha}f\|_{\lambda,d/2+2}+ \|v^{\alpha}f\|_{\lambda,\sigma+s/2}\|\rho\|_{\sigma}\|v^{\alpha}f\|_{\lambda,d/2+3} \\
    &+\|v^{\alpha}f\|_{\lambda,\sigma+s/2}\|\rho\|_{d/2+2}\|v^{\alpha}f\|_{\lambda,\sigma}\Big).\nonumber
\end{align}
Moreover, since $\sigma\geq d/2+6$ 
\begin{equation}
|C|\lesssim \lambda \|v^{\alpha}f\|_{\lambda,\sigma+s/2}\|\rho\|_{\sigma}\|v^{\alpha}f\|_{\lambda,\sigma}.
\end{equation}
Hence, by applying Lemma \ref{spatial_density} and adding up all terms in the previous decomposition of $|E_{NL}^1|$,  
\begin{align}
    |E_{NL}^1|\leq &|E_{NL}^{1,1}|+|E_{NL}^{1,2}|,\\
    \leq &|E_{NL}^{1,1}|+|A|+|B|+|C|,\\
    \lesssim &  \|f\|_{\lambda, \sigma+s/2,M}^2\Big(\lambda^2\|f\|_{\lambda,\sigma,M}+\lambda\|f\|_{\sigma,M}\Big)+\|f\|_{\lambda,\sigma,M}^2\|f\|_{\sigma,M}\Big(\lambda^2+1\Big).
\end{align}
\end{proof}
Finally, we estimate the second nonlinear term $E_{NL}^2$. 
\begin{claim}\label{second_nonlinear_term_gevrey}
\begin{equation}
    |E_{NL}^2|\lesssim \|v^{\alpha}f\|_{\lambda,\sigma}\|\rho\|_{\lambda,\sigma}\|v^{\alpha-j}f\|_{\lambda,\sigma}.
\end{equation}
\end{claim}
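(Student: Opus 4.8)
The plan is to bound $E_{NL}^2$ directly by Young's inequality (Lemma \ref{young}), exploiting that this term carries one fewer $\eta$-derivative than $E_{NL}^1$, so no commutator structure is needed — the naive $A_k$ bound suffices. First I would recall that
\[
E_{NL}^2=\sum_{|j|=1,\,j\leq\alpha}\sum_{k\in\Z^d}\int_{\R^d_\eta}AD^\alpha_\eta\overline{\hat f_k(t,\eta)}\;A\Big[\sum_{l\in\Z^d_*}\rho_l(t)\widehat W(l)\,l_j\,D^{\alpha-j}_\eta\hat f_{k-l}(t,\eta)\Big]\,d\eta,
\]
and that it suffices to treat a single multi-index $j$ with $|j|=1$, the finite sum over such $j$ being harmless. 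The essential point is to move the Fourier multiplier $A=A_k(t,\eta)$ sitting in front of the bracket onto the factor $D^{\alpha-j}_\eta\hat f_{k-l}$, paying for the mismatch between $A_k$ and $A_{k-l}$.

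The key step is the splitting $A_k(t,\eta)=\big(A_k(t,\eta)-A_{k-l}(t,\eta)\big)+A_{k-l}(t,\eta)$, exactly as in the proof of Claim \ref{first_nonlinear_term_gevrey}. For the $A_{k-l}$ piece, one immediately recognizes $A_{k-l}(t,\eta)D^{\alpha-j}_\eta\hat f_{k-l}(t,\eta)$ as (the integrand of) the $\|v^{\alpha-j}f\|_{\lambda,\sigma}$-norm, while $AD^\alpha_\eta\hat f_k$ is controlled by $\|v^\alpha f\|_{\lambda,\sigma}$; the remaining factor $\rho_l(t)\widehat W(l)l_j$ obeys $|\widehat W(l)l_j|\lesssim |l|^{-1}\lesssim 1$, so $\langle l\rangle^\sigma|\rho_l|$ lies in $\ell^2_l$ with norm $\lesssim\|\rho\|_{\lambda,\sigma}$ (indeed already $\|\rho\|_{\sigma}$ suffices here, but we keep $\|\rho\|_{\lambda,\sigma}$ for uniformity of the statement), and Lemma \ref{young}(1) gives the desired bound $\|v^\alpha f\|_{\lambda,\sigma}\|\rho\|_{\lambda,\sigma}\|v^{\alpha-j}f\|_{\lambda,\sigma}$. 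For the difference piece, I would invoke the inequality
\[
\langle k,\eta\rangle^\sigma e^{\lambda\langle k,\eta\rangle^s}-\langle k-l,\eta\rangle^\sigma e^{\lambda\langle k-l,\eta\rangle^s}\lesssim \langle l\rangle^\sigma e^{\lambda\langle l\rangle^s}\langle k-l,\eta\rangle^\sigma e^{\lambda\langle k-l,\eta\rangle^s},
\]
which follows from the mean value theorem estimates already displayed in the proof of Claim \ref{first_nonlinear_term_gevrey} together with $e^{\lambda\langle k,\eta\rangle^s}\leq e^{\lambda\langle l\rangle^s}e^{\lambda\langle k-l,\eta\rangle^s}$; absorbing $\langle l\rangle^\sigma e^{\lambda\langle l\rangle^s}$ into the density factor (using $|\widehat W(l)l_j|\lesssim 1$ again, so the relevant $\ell^2_l$ quantity is $\|\rho\|_{\lambda,\sigma}$) and $\langle k-l,\eta\rangle^\sigma e^{\lambda\langle k-l,\eta\rangle^s}$ into $D^{\alpha-j}_\eta\hat f_{k-l}$, another application of Lemma \ref{young}(1) yields the same bound.

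The mild obstacle is bookkeeping the derivative loss: in the difference term the factor $\langle k-l,\eta\rangle^\sigma$ we produce must be matched against $D^{\alpha-j}_\eta\hat f_{k-l}$ to form $\|v^{\alpha-j}f\|_{\lambda,\sigma}$, with no extra factor of $|\eta|$ appearing (unlike in $E_{NL}^1$, where the nonlinearity contributes $l\cdot\eta$) — here the differentiated term $D^{\alpha-j}_\eta$ has already consumed the $\eta$-multiplication, so the algebra is genuinely one order cheaper. One simply checks that all exponents close: the worst power of $\langle k-l,\eta\rangle$ created is $\sigma$, exactly what the target norm absorbs, so no condition beyond $\sigma\geq d/2+6$ (in fact $\sigma>d/2$) is needed. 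Summing the two pieces and the finitely many $j$ gives
\[
|E_{NL}^2|\lesssim \|v^\alpha f\|_{\lambda,\sigma}\|\rho\|_{\lambda,\sigma}\|v^{\alpha-j}f\|_{\lambda,\sigma},
\]
as claimed.
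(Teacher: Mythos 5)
Your treatment of the $A_{k-l}$ piece is correct and coincides with the first term of the paper's estimate: there the convolution weight is $r_l=\rho_l\widehat W(l)l_j$ with $|r_l|\lesssim\langle l\rangle^{-1}|\rho_l|$, so the extra $\langle l\rangle^{d/2+\epsilon}$ demanded by Lemma \ref{young}(1) costs only $\|\rho\|_{\lambda,d/2-1+\epsilon}\lesssim\|\rho\|_{\lambda,\sigma}$. The gap is in your difference piece. The bound
\begin{equation*}
A_k(t,\eta)-A_{k-l}(t,\eta)\lesssim \langle l\rangle^{\sigma}e^{\lambda\langle l\rangle^{s}}\,\langle k-l,\eta\rangle^{\sigma}e^{\lambda\langle k-l,\eta\rangle^{s}}
\end{equation*}
is true but too lossy: it places the full weight $\sigma$ on the density factor \emph{and} on the distribution factor simultaneously, while both versions of Lemma \ref{young} still require an additional $\langle\cdot\rangle^{d/2+\epsilon}$ on one of those two factors. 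Concretely, taking $r_l=\langle l\rangle^{\sigma}e^{\lambda\langle l\rangle^{s}}\rho_l\widehat W(l)l_j$ (so $|r_l|\lesssim\langle l\rangle^{\sigma-1}e^{\lambda\langle l\rangle^{s}}|\rho_l|$), version (1) of the lemma outputs $\|\langle l\rangle^{d/2+\epsilon}r\|_{\ell^2}\simeq\|\rho\|_{\lambda,\sigma+d/2-1+\epsilon}$, which is not controlled by $\|\rho\|_{\lambda,\sigma}$ once $d\geq 2$; version (2) instead outputs $\|v^{\alpha-j}f\|_{\lambda,\sigma+d/2+\epsilon}$, which is not controlled by $\|v^{\alpha-j}f\|_{\lambda,\sigma}$. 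So the estimate does not close as written.

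The repair is exactly the finer mean-value expansion you cite and then discard: keep
\begin{equation*}
\langle k,\eta\rangle^{\sigma}-\langle k-l,\eta\rangle^{\sigma}\lesssim_{\sigma}\langle l\rangle^{\sigma}+\langle l\rangle^{\sigma-1}\langle k-l,\eta\rangle+\langle l\rangle\,\langle k-l,\eta\rangle^{\sigma-1}
\end{equation*}
as a sum of three terms rather than collapsing it into the single product $\langle l\rangle^{\sigma}\langle k-l,\eta\rangle^{\sigma}$. In each summand the total degree $\sigma$ is split between $l$ and $(k-l,\eta)$ so that one side has degree at most $1$, and after multiplying by the $\langle l\rangle^{-1}$ from $\widehat W(l)l_j$ the $d/2+\epsilon$ loss of Lemma \ref{young} can always be absorbed on the low-degree side, using $\sigma\geq d/2+6$ at the end. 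This is precisely what the paper does: it bounds $A_k$ directly by the four products $\langle l\rangle^{-1}\langle k-l,\eta\rangle^{\sigma}$, $\langle l\rangle^{\sigma-1}$, $\langle l\rangle^{\sigma-2}\langle k-l,\eta\rangle$ and $\langle k-l,\eta\rangle^{\sigma-1}$ (each times $e^{\lambda\langle l\rangle^{s}}e^{\lambda\langle k-l,\eta\rangle^{s}}$), applies Young's inequality term by term, and then invokes $\sigma\geq d/2+6$. Your structural observations are right (no commutator or divergence structure is needed here since $E_{NL}^{2}$ carries one fewer $\eta$-derivative, and the sum over $j$ is harmless); the only error is the bookkeeping of where the $d/2$ loss from Young's inequality lands.
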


\begin{proof}
Since $e^{\lambda \langle k,\eta \rangle^s}\leq e^{\lambda\langle k-l,\eta \rangle^s}e^{\lambda \langle l \rangle^s}$ and by the mean value theorem,
\begin{align}
|E_{NL}^2|=&\sum_{|j|=1, j\leq \alpha}\sum_{l\in \Z^d_*}\sum_{k\in \Z^d}\int_{\R_{\eta}^d}|AD^{\alpha}_{\eta}\hat{f}_k(t,\eta)| \langle k,\eta \rangle^{\sigma}e^{\lambda\langle k,\eta \rangle^s}|\rho_l||\widehat{W}(l)||l_j| \\
    &\qquad \qquad \qquad \qquad \cdot  |D^{\alpha-j}_{\eta}\hat{f}_{k-l}(t,\eta)| d\eta,\nonumber\\
\lesssim&\sum_{j,k,l}\int_{\R_{\eta}^d}|AD^{\alpha}_{\eta}\hat{f}_k(t,\eta)| \langle k,\eta \rangle^{\sigma}e^{\lambda\langle k,\eta \rangle^s}|\rho_l|\langle l \rangle^{-1}  |D^{\alpha-j}_{\eta}\hat{f}_{k-l}(t,\eta)| d\eta,\\
\lesssim& \sum_{j,k,l}\int_{\R_{\eta}^d}|AD^{\alpha}_{\eta}\hat{f}_k(t,\eta)| \langle l \rangle^{-1}e^{\lambda\langle l \rangle^s}|\rho_l|\langle k-l,\eta \rangle^{\sigma}e^{\lambda\langle k-l,\eta \rangle^s} |D^{\alpha-j}_{\eta}\hat{f}_{k-l}(t,\eta)| d\eta \\
&+ \sum_{j,k,l}\int_{\R_{\eta}^d}|AD^{\alpha}_{\eta}\hat{f}_k(t,\eta)| \langle l \rangle^{\sigma-1}e^{\lambda\langle l \rangle^s}|\rho_l|e^{\lambda\langle k-l,\eta \rangle^s} |D^{\alpha-j}_{\eta}\hat{f}_{k-l}(t,\eta)| d\eta \nonumber
\end{align}
\begin{align}  
&+ \sum_{j,k,l}\int_{\R_{\eta}^d}|AD^{\alpha}_{\eta}\hat{f}_k(t,\eta)| \langle l \rangle^{\sigma-2}e^{\lambda\langle l \rangle^s}|\rho_l|\langle k-l,\eta \rangle e^{\lambda\langle k-l,\eta \rangle^s} |D^{\alpha-j}_{\eta}\hat{f}_{k-l}(t,\eta)| d\eta, \nonumber\\
&+ \sum_{j,k,l}\int_{\R_{\eta}^d}|AD^{\alpha}_{\eta}\hat{f}_k(t,\eta)| e^{\lambda\langle l \rangle^s}|\rho_l|\langle k-l,\eta \rangle^{\sigma-1}e^{\lambda\langle k-l,\eta \rangle^s} |D^{\alpha-j}_{\eta}\hat{f}_{k-l}(t,\eta)| d\eta,\nonumber \\
\lesssim & \|f\|_{\lambda, \sigma}\|\rho\|_{\lambda, d/2} \|v^{\alpha-j}f\|_{\lambda, \sigma}+\|f\|_{\lambda, \sigma}\|\rho\|_{\lambda, \sigma-1} \|v^{\alpha-j}f\|_{\lambda, d/2+1}\\
&+\|f\|_{\lambda, \sigma}\|\rho\|_{\lambda, \sigma-2} \|v^{\alpha-j}f\|_{\lambda, d/2+2}+\|f\|_{\lambda, \sigma}\|\rho\|_{\lambda, d/2+1} \|v^{\alpha-j}f\|_{\lambda, \sigma-1},\nonumber
\end{align}
by Young's inequality. Finally, since $\sigma\geq d/2+6$ 
\begin{equation}
    E_{NL}^2\lesssim \|v^{\alpha}f\|_{\lambda,\sigma}\|\rho\|_{\lambda,\sigma}\|v^{\alpha-j}f\|_{\lambda,\sigma}.
\end{equation}

\end{proof}
Finally, we conclude this section by putting all the previous estimates together in the proof of the main result in this paper. 
\subsection{Proof of Theorem \ref{propagation_regularity}}
\begin{proof}
By Claim \ref{linear_term_gevrey}, Claim \ref{first_nonlinear_term_gevrey} and Claim \ref{second_nonlinear_term_gevrey} 
\begin{align}
\dfrac{1}{2}\dfrac{d}{dt}\|f\|_{\lambda, \sigma,M}^2\lesssim & \|f\|_{\lambda,\sigma+s/2,M}^2\Big(\dot{\lambda}+\lambda^2\|f\|_{\lambda,\sigma,M}+\lambda \|f\|_{\sigma,M}+\lambda\Big)\\
&+\|f\|_{\lambda,\sigma,M}^2\Big[\|f\|_{\sigma,M}\Big(\lambda^2+1\Big)+1\Big].\nonumber
\end{align}
In particular, by choosing $\lambda$ such that $\dot{\lambda}+\lambda^2\|f\|_{\lambda,\sigma,M}+\lambda \|f\|_{\sigma,M}+\lambda\leq 0$, then 
\begin{equation}
\dfrac{1}{2}\dfrac{d}{dt}\|f\|_{\lambda,\sigma,M}^2\lesssim \|f\|_{\lambda,\sigma,M}^2\Big[\|f\|_{\sigma,M}\Big(\lambda^2+1\Big)+1\Big].    
\end{equation}
In order to obtain the estimate in Theorem \ref{propagation_regularity}, we use the quantitative estimate for the growth of the Sobolev norm $\|f\|_{\sigma,M}$ in Lemma \ref{sobolev_growth}. Thus, by Gronwall's lemma,
\begin{align}
\|f\|_{\lambda,\sigma,M}&\leq C\exp\Big(Ct+C\int_0^t \|f\|_{\sigma,M} ds \Big),\\ &\leq C\exp\Big\{Ct+Ct\exp\Big[C\int_0^t\Big(\|\nabla_x F\|_{\infty}+\| F\|_{\infty}+1\Big) ds\Big] \Big\}=:A(t), 
\end{align}
where $C$ is a constant depending on the initial data $f_0$, the dimension $d$, and the constants $M$ and $\sigma$. Furthermore, in the worse case scenario $\lambda<1$, so 
\begin{equation}
\dot{\lambda}+\lambda^2\|f\|_{\lambda,\sigma,M}+\lambda \|f\|_{\sigma,M}+\lambda\leq \dot{\lambda}+\lambda \Big(A(t)+1\Big),    
\end{equation}
which is clearly satisfied by $\lambda(t):= C\exp\Big[-\int_0^t\Big(2A(t)+1\Big)ds\Big].$ Thus, \begin{equation}
    \lambda(t)\geq C\exp\Big[-\int_0^t\Big(2A(t)+1\Big)ds\Big].
\end{equation}
\end{proof}







\section{Global existence of Gevrey solutions}\label{section_application}
In this section we prove Theorem \ref{global_existence_gevrey} as a direct consequence of Theorem \ref{propagation_regularity}. This applications uses crucially the global existence results discussed in the Introduction \ref{introduction}.
\begin{proof}[Proof of Theorem \ref{global_existence_gevrey}]
By the main theorem in \cite{BR}, there exists a unique global classical solution $f\in C(0,\infty;H^\sigma_{x,v}(\T^3\times \R^3))$ of the Vlasov-Poisson system (\ref{vlasov-poisson}) such that \begin{equation}\label{propagation_condition}
    \|F[f]\|_{W^{1,\infty}}(t)+\|\nabla_v f\|_{\infty,M}(t)<\infty,
\end{equation} for every $t\geq 0$. Since $\|f_0\|_{\sigma,M}\leq\|f_0\|_{\lambda_0,\sigma,M;s}<\infty$, the unique global classical solution satisfies $f\in C(0,\infty;H^\sigma_{x,v;M})$, by Lemma \ref{sobolev_growth}. As a result 
\begin{equation}
A(t):=C\exp\Big\{Ct+Ct\exp\Big[C\int_0^t\Big(\|F[f]\|_{W^{1,\infty}}+\|\nabla_v f\|_{\infty,M}+1\Big) ds\Big] \Big\}    
\end{equation} 
is finite. Therefore, the unique global classical solution $f\in C(0,\infty;H^\sigma_{x,v;M}(\T^3\times \R^3))$ of the Vlasov-Poisson system (\ref{vlasov-poisson}) satisfies 
\begin{equation}
    \|f\|_{\lambda,\sigma,M;s}(t)\leq A(t)<\infty.
\end{equation} In the case where the system is defined on $\R^3\times \R^3$, the result follows after replacing the theorem cited in \cite{BR} for the one in the theorem in page 1316 of \cite{S}.
\end{proof}

\end{document}